\documentclass[11pt]{article}

\usepackage{amssymb,amsthm,amsmath}


\usepackage[usenames,dvipsnames]{xcolor}
\usepackage[colorlinks,citecolor=blue,linkcolor=BrickRed]{hyperref}\usepackage[colorlinks,citecolor=blue,linkcolor=BrickRed]{hyperref}
\usepackage{thmtools,thm-restate}
\usepackage{comment}
\usepackage{cleveref}
\usepackage{fullpage}
\usepackage{pdfpages}
\usepackage{graphicx}
\graphicspath{ {./images/} }

\newtheorem{theorem}{Theorem}[section]
\newtheorem{corollary}[theorem]{Corollary}

\newtheorem{lemma}[theorem]{Lemma}

\newtheorem{definition}[theorem]{Definition}

\newtheorem{proposition}[theorem]{Proposition}
\newtheorem{remark}[theorem]{Remark}
\newtheorem{claim}[theorem]{Claim}

\newtheorem{fact}[theorem]{Fact}
\newtheorem*{claim*}{Claim}

\newcommand{\E}{\mathbb{E}}

\newcommand{\R}{\mathbb{R}}

\newcommand{\Tr}{\mathsf{Tr}}

\newcommand{\diag}{\mathsf{diag}}

\newcommand{\ignore}[1]{{}}

\title{An Exposition of the $\widetilde{O}(\log^{1/4} n)$ Bound for the Koml\'os Problem\footnote{An extended abstract of this work appeared in STOC 2026 \cite{BJ26}. The present article treats only the Koml\'os problem, with the proof simplified and recast in the language of
stochastic calculus. The bounds for the Beck--Fiala problem obtained in \cite{BJ26}
are not reproduced here: their proofs require additional technicalities,
and stronger bounds have since been obtained by Altschuler and Tikhomirov \cite{AT26}
by a different method.}}

\author{Nikhil Bansal\thanks{University of Michigan, Ann Arbor, MI, USA. \texttt{bansaln@umich.edu}.} 
 \and 
Haotian Jiang\thanks{University of Chicago, Chicago, IL, USA. \texttt{jhtdavid@uchicago.edu}.}}

\date{}
\begin{document}

\allowdisplaybreaks
\begin{titlepage}
\maketitle

\begin{abstract}
A conjecture of Koml\'os states that the combinatorial discrepancy of any matrix $A\in\mathbb R^{m\times n}$ whose columns have Euclidean norm at most one is bounded by a universal constant. We prove that the combinatorial discrepancy of every such matrix is at most $O((\log n)^{1/4}(\log\log n)^{7/4})$. This is the first asymptotic improvement over the $O(\sqrt{\log n})$ bound established by Banaszczyk [Banaszczyk, Random Struct.\ Algorithms, 1998], and it refutes a conjecture of Hajela [Hajela, European J.\ Combin., 1988] that a lower bound of order $\Omega(\sqrt{\log n})$ should hold.
\end{abstract}

 \thispagestyle{empty}
\end{titlepage}

\newpage
\setcounter{page}{1}

\section{Introduction}
\label{sec:introduction}

The combinatorial discrepancy of a matrix $A \in \R^{m \times n}$ is the smallest $\ell_\infty$ norm over all possible signed sums of its columns:
\[
\mathsf{disc}(A):=\min_{x\in\{-1,1\}^n}\|Ax\|_\infty .
\]
A central open problem in discrepancy theory is a conjecture of Koml\'os from the 1980s which asserts that $\mathsf{disc}(A)$ is bounded by a universal
constant whenever every column of $A$ has Euclidean norm at most one.
The Koml\'os conjecture contains, after a normalization, the Beck--Fiala conjecture: if
$A\in\{0,1\}^{m\times n}$ is the incidence matrix of a set system in which
every element lies in at most $k$ sets, then $k^{-1/2}A$ is a Koml\'os
instance, so the Koml\'os conjecture would imply
$\mathsf{disc}(A)=O(\sqrt k\,)$.  Beck and Fiala \cite{BF81} proved that
$\mathsf{disc}(A)\le 2k-1$ for every such set system and conjectured the
$O(\sqrt k\,)$ bound.  We refer to \cite{Cha00,Mat09,CST14,Ban22} for background on
discrepancy theory and its connections with many other areas of mathematics and computer science. 

The partial-coloring method, introduced by Beck \cite{Bec81} and refined
by Spencer \cite{Spe85} and Gluskin \cite{Glu89}, colors a constant fraction
of the coordinates at bounded discrepancy; iterating it for $O(\log n)$
rounds yields $\mathsf{disc}(A)=O(\log n)$ for the Koml\'os problem.
The best prior bound for this problem was $O(\sqrt{\log n})$ due to Banaszczyk \cite{Ban98}.

In the opposite direction, no lower bound growing with $n$ is known: the best known lower bound, due to Kunisky \cite{Kun23}, provides instances with
discrepancy approaching $1+\sqrt 2$.  Hajela \cite{Haj88} nevertheless
conjectured that a lower bound of order $\Omega(\sqrt{\log n})$ should hold (matching the upper bound later proved by Banaszczyk), and
established a weaker form of it in which the minimum is taken over an
arbitrary, but subexponential, family of sign vectors in $\{-1,1\}^n$; see also
\cite{CS21}.

We give an asymptotic improvement on Banaszczyk's bound, thereby refuting Hajela's conjecture.

\begin{restatable}{theorem}{maintheorem}\label{thm:main}
There is a universal constant $C>0$  such that for any matrix $A\in\mathbb R^{m\times n}$ whose columns have Euclidean norm at most one, there exists $x\in\{-1,1\}^n$ such that
\[
    \|Ax\|_\infty
    \le C(\log n)^{1/4}(\log\log n)^{7/4}.
\]
\end{restatable}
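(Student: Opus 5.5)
We plan to construct the coloring by a continuous-time random walk $x_t\in[-1,1]^n$ driven by a stochastic differential equation $dx_t = \Sigma_t^{1/2}\,dB_t$, started at $x_0=0$. The covariance $\Sigma_t$ is supported on the currently alive (not yet frozen) coordinates and is chosen adaptively, as in the Gram--Schmidt walk or Bansal's SDP-based walks. A coordinate freezes once it reaches $\pm1$.

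We first reduce to $m\le \mathrm{poly}(n)$ rows using the standard net/bucketing argument: rows of tiny norm contribute negligibly, and the remaining rows are handled by a union bound. We then reduce further, by a partial-coloring iteration, to proving the following statement: there is a fractional coloring in which half the coordinates are fully colored, and every row has discrepancy $O((\log n)^{1/4}\mathrm{polyloglog}\, n)$. Summing over $O(\log n)$ rounds would lose a $\log n$ factor. To avoid this we will run the whole process as a single martingale and control each row's total quadratic variation $\int_0^\infty \langle a_i,\Sigma_t a_i\rangle\,dt$ directly, rather than paying per round.

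The key mechanism for going beyond Banaszczyk is as follows. A Banaszczyk-type walk makes each row's final discrepancy $O(1)$-subgaussian, which gives $\sqrt{\log m}=\sqrt{\log n}$ after a union bound. We will instead combine two ingredients.

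First, we keep a dangerous set of rows whose current discrepancy $|\langle a_i,x_t\rangle|$ exceeds a threshold $\lambda\approx(\log n)^{1/4}$. For these rows we impose $\Sigma_t a_i=0$, i.e.\ we block them, as in Spencer/Bansal-style partial coloring with energy constraints. The SDP/vector-coloring guarantees, namely the Koml\'os-type matrix discrepancy results assumed from earlier sections, will let us block up to a constant fraction of the dimension. This holds provided the blocked rows satisfy $\sum_i \|a_i|_{\text{alive}}\|^2 \lesssim |\text{alive}|$.

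Second, for rows below the threshold we ensure that the increments have variance $\langle a_i,\Sigma_t a_i\rangle\le \|a_i|_{\text{alive}}\|^2\cdot\sigma_t^2$. A row then reaches level $\lambda$ only with probability roughly $\exp(-\lambda^2/\mathrm{var})$. Once it is dangerous it is frozen, and subsequently moves only when its norm on the alive set is redistributed.

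The accounting step is a potential/counting argument. We must show that the expected total mass $\sum_i \|a_i\|^2\,\mathbf 1[\text{row } i \text{ dangerous}]$ stays at most a small fraction of the alive dimension at all times. Since $\sum_i\|a_i\|^2\le n$, it suffices that each row becomes dangerous with probability $\lesssim 1/\mathrm{poly}\log n$ per scale. Rows that are blocked accumulate no further discrepancy, apart from what arises when blocking is released; we will handle release by restarting the threshold at a geometrically increasing sequence of levels $\lambda_j$.

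The exponent $1/4$ should come from balancing two costs. The first is the $\sqrt{\log n}$ subgaussian tail that a row accumulates while it is unblocked over the $\log n$ dyadic scales of alive-set size. The second is the number of times a row can be blocked and released. Choosing $\lambda=(\log n)^{1/4}$ makes the failure probability of reaching the next level $\lambda_{j+1}$ within a scale $e^{-\lambda^2}$. A union bound over scales and rows then needs only $\lambda\cdot\sqrt{\#\text{releases}}$, and with about $\sqrt{\log n}$ releases this gives total $(\log n)^{1/4}$ up to $\log\log$ factors. The $(\log\log n)^{7/4}$ factor arises from the $\log\log n$ scales and the union bounds inside each scale.

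The main obstacle I expect is the second step: proving that the blocked rows never exhaust the available dimension, uniformly over time, with high probability. This requires a martingale concentration bound, via Freedman's inequality on the stochastic integral, for the blocked mass. It also requires showing that rows repeatedly crossing the threshold cannot conspire. I would first try an exponential potential $\Phi_t=\sum_i \|a_i\|^2\exp\big(\langle a_i,x_t\rangle^2/\lambda^2\big)$ and use Itô's formula to show that it is a supermartingale under a suitably chosen SDP covariance. If this fails, I would fall back to a multi-scale version with separate potentials per level.
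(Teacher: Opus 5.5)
Your outline has the right skeleton: a sticky continuous-time walk with an SDP-chosen covariance, blocking of high-discrepancy rows, and a counting argument that the blocked set never exhausts the dimension. But the step you flag as the main obstacle is where the whole proof lives, and the idea that makes it work is missing.

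\textbf{The accounting step.} Your sufficient condition is that each row becomes dangerous with probability $\lesssim 1/\mathrm{poly}\log n$, so the expected dangerous mass is small compared to $n$. This measures against the wrong quantity. The blocking budget is $|V_t|$, which shrinks and is chosen adaptively by the process. Once $|V_t|\ll n/\mathrm{poly}\log n$, no expectation bound keeps the dangerous rows below it. You need a high-probability bound that holds simultaneously for every possible alive set.

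The paper gets this column by column. For each column $j$ and scale $p$, at most $2^{2p}\mu_p/(40P)$ of the at most $2^{2p}$ rows with a scale-$p$ entry in column $j$ ever become dangerous, with failure probability $n^{-4}$. Hence every column carries dangerous mass at most $\mu_p/(10P)$. Summing over $j\in V_t$ then bounds the dangerous medium rows by $|V_t|/(10P)$ at all times.

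Such a per-column bound at confidence $1-n^{-4}$ is false if row discrepancies are correlated: near-duplicate rows cross the threshold together with probability $e^{-\Theta(\lambda^2)}$. So something must decorrelate them. The paper adds the affine spectral independence constraint $E_t^{(p)}U_t(E_t^{(p)})^\top\preceq (6P/\mu_p)\,\diag\bigl(E_t^{(p)}U_t(E_t^{(p)})^\top\bigr)$ to the SDP. This is affordable by \Cref{thm:sub-isotropic-SDP_Multi-Class}, since there are at most $|V_t|/\mu_p$ medium rows. It then proves the decoupling bound \Cref{thm:decoupling-bound}, where the correlation parameter $\alpha$ enters only the additive term $(\alpha\lambda/\theta)\log(1/\gamma)$, not the main term $me^{-\lambda B}$.

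Your global potential $\sum_i\|a_i\|^2\exp(\langle a_i,x_t\rangle^2/\lambda^2)$ cannot replace this, for three reasons. It has positive drift unless a negative drift is built in; the paper's regularization $\beta\sum_j a_i(j)^2(1-x_t(j)^2)$ does exactly that. Its quadratic variation is uncontrolled without a cross-row covariance constraint. And as a single global sum it gives no per-column, alive-set-uniform information.

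\textbf{Further problems.} Blocking costs one dimension per row regardless of its alive mass. So $\sum_i\|a_i|_{\mathrm{alive}}\|^2\lesssim|\mathrm{alive}|$ does not bound the number of blocked rows. The paper blocks only two kinds of rows:
\begin{itemize}
\item large rows (alive mass $>10$), of which there are at most $|V_t|/10$;
\item dangerous medium rows (alive mass in $(\mu_p,10]$), whose number is at most their mass divided by $\mu_p$.
\end{itemize}
It never blocks small rows. Their remaining discrepancy is $O(b)$, either by the $\ell_1$ bound $2^p\mu_p=O(b)$ or by \Cref{prop:spec-indep-subg-disc} with $\sqrt{\mu_p\log n}=O(b)$. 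This is what the entry-magnitude scale decomposition is for.

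Imposing $\Sigma_t a_i=0$ freezes $\langle a_i,x_t\rangle$ exactly, which forces your ad hoc ``releases.'' The paper instead blocks the direction $E_t^{(p)}(i,\cdot)$ of the regularized discrepancy $Y^{(p)}_t(i)$. That quantity then has only its negative drift and never exceeds $2b$, so no release is ever paid for.

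Finally, your exponent heuristic does not give $1/4$. With $\lambda=(\log n)^{1/4}$ and about $\sqrt{\log n}$ releases, $\lambda\sqrt{\#\text{releases}}=(\log n)^{1/2}$. In the paper the $1/4$ comes from requiring the decoupling bound's additive term, roughly $P\log n\log\log n/(b^2\mu_p)$, to be at most $2^{2p}\mu_p/(40P)$ with $\mu_p\ge 4b/2^p$. That is, $b^4\gtrsim P^2\log n\log\log n$.
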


To keep the exposition clean, we have made no attempt to optimize the $(\log \log n)^{7/4}$ factor.

\subsection{Outline of the Proof}
The coloring $x \in \{-1,1\}^n$ in \Cref{thm:main} is produced via a stochastic process.  Let $(W_t)_{t \geq 0}$ be a
standard Brownian motion in $\R^n$, and consider the It\^o process
$(x_t)_{0 \leq t \leq T} \subset [-1,1]^n$ given by
\[
    x_0 = 0, \qquad d x_t = \Sigma_t \, d W_t ,
\]
where the progressively measurable diffusion matrix $\Sigma_t \in \R^{n \times n}$ is chosen adaptively to satisfy various desirable properties and to ensure 
that the process is \emph{sticky}: each coordinate  $x_t(j)$ ceases to move once it reaches $\pm 1$.
We defer the details to \Cref{subsec:ito_Komlos}.

Roughly speaking, prior approaches \cite{BG17,BDG19,BLV22} choose $\Sigma_t$ so that the rate of the quadratic variation $U_t := d [x,x]_t / d t = \Sigma_t \Sigma_t^\top$ is $O(1)$-{\em spectrally independent}, meaning that $U_t \preceq O(1) \cdot \diag(U_t)$. 
This guarantees that the discrepancy of each row $a_i$ has a subgaussian tail, and  a union bound over
the rows recovers Banaszczyk's $O(\sqrt{\log n})$ bound. 
We reproduce this proof in \Cref{subsec:Banaszczyk_alg}.
It is important to note that this extra $\sqrt{\log n}$ factor due to the union bound is unavoidable so long as the rows are controlled one at a
time.

To bypass the $\sqrt{\log n}$ bottleneck, our new idea is to impose spectral independence also on the increments of certain row
discrepancies themselves, a property we call \emph{affine spectral independence}.
Under this new property, we prove a {\em decoupling bound} (\Cref{thm:decoupling-bound}), which, together with a counting argument, shows that at any point in the process, there cannot be too many {\em dangerous} rows with both high discrepancy and many coordinates that have not reached $\pm 1$ (\Cref{lem:key-lemma-Komlos}).  
This allows us to control their discrepancies by walking orthogonally to all of them. 

\smallskip
\noindent \textbf{Organization.}
\Cref{sec:prelim} presents basic notation and definitions, and   develops the technical tools used in the proof: the
decoupling bound for vector-valued It\^o processes appears in \Cref{subsec:decoupling}, its application to discrepancy is explained in \Cref{subsec:regularized_disc}, the subgaussian
discrepancy estimate corresponding to Banaszczyk's bound is given in \Cref{subsec:Banaszczyk_alg}, and the existence of $\Sigma_t$ under various constraints is shown in \Cref{subsec:feasibility_SDP}.  The proof of \Cref{thm:main} is then given in \Cref{sec:komlos_proof}.

\section{Preliminaries}
\label{sec:prelim}

\smallskip
\noindent \textbf{Notation.}
For a vector $u \in \R^n$, we use $u(j)$ to denote its $j$th entry, and $u^{\odot 2}$ to denote the vector with entries $u(j)^2$ for $j\in [n]$. 
We write the coordinate-wise product of two vectors $u,v\in \R^n$ as $u \odot v$. 
For a matrix $M$, we use $M(i,j)$ to denote its $(i,j)$-th entry, $M(i,\cdot)$ to denote its $i$th row, $\Tr(M)$ to denote the trace of $M$, and $\|M\|_{\mathsf{HS}}$ to denote its Hilbert-Schmidt norm.
We use PSD to mean positive semi-definite. 
Throughout, $\log$ is to the base $e$, unless otherwise stated. 
We use $a_1,\ldots,a_m$ to denote the rows of the input matrix $A$, and $a_i(j) = A(i,j)$ to denote its $j$th entry. Throughout, we 
use $i$ to index the rows and $j$ to index columns.

\subsection{It\^o Calculus}

In this subsection, we recall the basics of It\^o calculus. 

\begin{definition}[It\^o processes] 
An adapted continuous $\mathbb R^m$-valued process $Z=(Z_t)_{0\le t\le T}$ 
is called an \emph{It\^o process} if there exist progressively measurable processes $\Sigma_t\in\mathbb R^{m\times r}$ (for some $r\ge 1$) and $b_t\in\mathbb R^m$  satisfying
\[
\int_0^T \|\Sigma_t\|_{\mathsf{HS}}^2\,dt < \infty
\qquad\text{and}\qquad
\int_0^T \|b_t\|_1\,dt < \infty
\quad\text{almost surely,}
\]
such that $d Z_t = \Sigma_t\,dW_t + b_t\,dt$, where $W_t$ is a standard Brownian motion in $\R^{r}$. 
\end{definition}

The quadratic variation of $Z_t$, denoted by $[Z,Z]_t$, is given by $d [Z,Z]_t = \Sigma_t \Sigma_t^\top d t$. 
The increments for functions of an It\^o process are given by the It\^o formula.

\begin{theorem}[It\^o formula] \label{thm:Ito-formula}
Let $Z=(Z_t)_{0\le t\le T}$ be an $\mathbb R^m$-valued It\^o process with increment $dZ_t=\Sigma_t\,dW_t+b_t\,dt$, 
and let $f: \R^m \rightarrow \R$ be a twice continuously differentiable function. Then the real-valued process $(f(Z_t))_{0\le t\le T}$ is also an It\^o process whose increment is given by 
\[
df(Z_t)
= \nabla f(Z_t)^\top\, dZ_t + \frac12 \Tr\bigl(\nabla^2 f(Z_t)\, d[Z,Z]_t\bigr), 
\]
where $\nabla f$ denotes the gradient of $f$ and $\nabla^2f$ its Hessian matrix.
\end{theorem}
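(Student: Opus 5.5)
We prove the It\^o formula by the classical Taylor-expansion-along-partitions argument, after a localization that reduces to bounded data. First we localize: for $N\ge 1$ define the stopping time $\tau_N$ as the first time $t$ at which $\|Z_t\|$, $\int_0^t\|\Sigma_s\|_{\mathsf{HS}}^2ds$ or $\int_0^t\|b_s\|_1ds$ exceeds $N$ (capped at $T$). Since $Z$ is continuous and the integrability conditions in the definition of an It\^o process hold almost surely, $\tau_N\uparrow T$ almost surely. It therefore suffices to prove the identity for the stopped process $Z_{t\wedge\tau_N}$. On this event $Z$ stays in a compact ball, so we may also replace $f$ by a $C^2$ function with compact support, hence with $f,\nabla f,\nabla^2 f$ bounded and $\nabla^2 f$ uniformly continuous.

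Next fix $t$ and a partition $0=t_0<t_1<\dots<t_k=t$ with mesh $\delta\to 0$. Write $\Delta_\ell Z=Z_{t_{\ell+1}}-Z_{t_\ell}$ and telescope $f(Z_t)-f(Z_0)=\sum_\ell\bigl(f(Z_{t_{\ell+1}})-f(Z_{t_\ell})\bigr)$. Taylor's theorem with remainder gives
\[
f(Z_{t_{\ell+1}})-f(Z_{t_\ell})=\nabla f(Z_{t_\ell})^\top\Delta_\ell Z+\tfrac12(\Delta_\ell Z)^\top\nabla^2 f(Z_{t_\ell})\Delta_\ell Z+R_\ell ,
\]
with $|R_\ell|\le \omega(\|\Delta_\ell Z\|)\,\|\Delta_\ell Z\|^2$, where $\omega$ is the modulus of continuity of $\nabla^2 f$. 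We handle the three resulting sums in turn.
\begin{itemize}
\item The first-order sum converges in probability to $\int_0^t\nabla f(Z_s)^\top dZ_s$, by the definition of the stochastic integral as a limit of left-point Riemann sums for continuous adapted integrands (and of ordinary Lebesgue integrals for the $b_s\,ds$ part).
\item The second-order sum should converge to $\frac12\int_0^t\Tr(\nabla^2 f(Z_s)\Sigma_s\Sigma_s^\top)ds$. To see this, split $\Delta_\ell Z=\int\Sigma\,dW+\int b\,ds$. The cross terms and the pure-drift terms vanish: they are bounded by $\max_\ell\|\Delta_\ell Z\|$ times the total variation of the drift, and $\max_\ell\|\Delta_\ell Z\|\to0$ by uniform continuity of paths.
\item The remainder satisfies $\sum_\ell|R_\ell|\le \omega(\max_\ell\|\Delta_\ell Z\|)\sum_\ell\|\Delta_\ell Z\|^2\to 0$, because the sum of squared increments stays bounded in probability (it converges to $\Tr[Z,Z]_t$).
\end{itemize}

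The main obstacle is the convergence of the martingale part of the quadratic term, i.e.\ showing that
\[
\sum_\ell (\Delta_\ell M)^\top H_\ell\,\Delta_\ell M-\sum_\ell\int_{t_\ell}^{t_{\ell+1}}\Tr\bigl(H_\ell\Sigma_s\Sigma_s^\top\bigr)ds\to 0 \quad\text{in } L^2,
\]
where $M=\int\Sigma\,dW$ and $H_\ell=\nabla^2 f(Z_{t_\ell})$. Here I would use that, conditionally on $\mathcal F_{t_\ell}$, the summands are martingale differences: by It\^o's isometry and the multivariate product rule for $M$, the process $M_sM_s^\top-\int_0^s\Sigma\Sigma^\top$ is a local martingale, bounded after localization. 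Hence the $L^2$ norm of the difference equals the sum of the conditional variances of the summands. By Burkholder--Davis--Gundy, each variance is bounded by $\E\bigl(\int_{t_\ell}^{t_{\ell+1}}\|\Sigma_s\|_{\mathsf{HS}}^2ds\bigr)^2$. The sum of these is at most $\E\bigl[\max_\ell\int_{t_\ell}^{t_{\ell+1}}\|\Sigma\|_{\mathsf{HS}}^2\cdot N\bigr]$, which tends to $0$ by dominated convergence. Finally, replacing $H_\ell$ by $\nabla^2 f(Z_s)$ inside the integral costs at most $\omega(\cdot)\,N\to0$.

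Combining the three limits, and passing to a subsequence to upgrade convergence in probability to almost sure convergence, gives the formula for each fixed $t$. Continuity of both sides in $t$ then yields it simultaneously for all $t$, and letting $N\to\infty$ removes the localization.
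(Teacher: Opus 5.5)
Your argument is correct. It is the standard textbook proof: localize to bounded data and a compactly supported $f$, Taylor-expand along a partition, and identify the first-order sums with the stochastic integral. The quadratic sums are then shown to converge to $\frac12\int_0^t\Tr(\nabla^2 f(Z_s)\Sigma_s\Sigma_s^\top)\,ds$ by the martingale-difference $L^2$ estimate.

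The paper gives no proof of this statement. It simply recalls the It\^o formula as a standard fact of stochastic calculus and uses it as a black box: in the Freedman-type inequality, in the decoupling bound, in the increment of the regularized discrepancy, and in the computation $d\|x_t\|_2^2=dt$. Your route makes the exposition self-contained. It still rests on other standard facts about the stochastic integral: the conditional It\^o isometry, Burkholder--Davis--Gundy, and convergence of left-point Riemann sums for continuous adapted integrands. That is about as far down as one can reasonably go without redoing the construction of the integral.

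Two small points to tighten.

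First, to get that $M_sM_s^\top-\int_0^s\Sigma\Sigma^\top$ is a martingale, invoke only the polarized conditional It\^o isometry $\E[\Delta_\ell M(\Delta_\ell M)^\top\mid\mathcal F_{t_\ell}]=\E[\int_{t_\ell}^{t_{\ell+1}}\Sigma_s\Sigma_s^\top\,ds\mid\mathcal F_{t_\ell}]$. Do not use the multivariate product rule, which is the special case $f(x)=x_ix_j$ of the statement you are proving and so would be circular. After your localization $M$ is bounded, so this martingale is a true martingale, and the isometry is all you need.

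Second, the statement also asserts that $f(Z)$ is an It\^o process. This requires checking the integrability conditions of the definition for $\nabla f(Z_s)^\top\Sigma_s$ and for $\nabla f(Z_s)^\top b_s+\frac12\Tr(\nabla^2 f(Z_s)\Sigma_s\Sigma_s^\top)$. Both hold immediately because $\nabla f(Z_s)$ and $\nabla^2 f(Z_s)$ are pathwise bounded on $[0,T]$.
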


A real-valued It\^o process is a {\em supermartingale} if $b_t \leq 0$ for all $0 \leq t \leq T$ almost surely. A random time $\tau$ is called a \emph{stopping time} if, for each time $t\ge 0$, the event $\{\tau\leq t\}$ is fully determined by the information up to time $t$. We need the following optional stopping theorem for supermartingales. 

\begin{theorem}[Optional stopping theorem]
Let $(Z_t)_{0\le t\le T}$ be a supermartingale, and let $\tau$ be a stopping time such that $0 \le \tau \le T$ almost surely. Then $\E[Z_\tau] \leq \E[Z_0]$. 
\end{theorem}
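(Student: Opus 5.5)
The plan is to prove the statement in its classical form and to avoid any localization limit that would need extra hypotheses. I read ``supermartingale'' as the paper does: a real-valued It\^o process $dZ_t=\Sigma_t\,dW_t+b_t\,dt$ with $b_t\le 0$. I also take it, as the term requires, that each $Z_t$ is integrable and that $\E[Z_t\mid\mathcal F_s]\le Z_s$ for $s\le t$. Granting this, the drift condition only serves to identify the process as a supermartingale. The stopping statement then follows from discrete optional stopping, dyadic approximation of $\tau$, and path continuity, and no boundedness of $Z$ or $\Sigma$ is assumed.

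\textbf{Step 1 (discrete case).} Suppose $\tau$ takes finitely many values $0=t_0<t_1<\dots<t_N=T$. Write
\[
Z_\tau=Z_0+\sum_{k=1}^N (Z_{t_k}-Z_{t_{k-1}})\,\mathbf 1\{\tau\ge t_k\}.
\]
The event $\{\tau\ge t_k\}=\{\tau\le t_{k-1}\}^c$ lies in $\mathcal F_{t_{k-1}}$. Conditioning each summand on $\mathcal F_{t_{k-1}}$ therefore gives a nonpositive expectation, so $\E[Z_\tau]\le\E[Z_0]$. The same computation, applied to a pair of finitely-valued stopping times $\rho\le\sigma$, gives $\E[Z_\sigma\mid\mathcal F_\rho]\le Z_\rho$.

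\textbf{Step 2 (approximation).} For a general $\tau\le T$, define $\tau_n:=\min\{T,\,2^{-n}T\lceil 2^n\tau/T\rceil\}$. Each $\tau_n$ is a stopping time taking finitely many values, and $\tau_n\downarrow\tau$. By Step 1, $\E[Z_{\tau_n}]\le\E[Z_0]$. Since $Z$ has continuous paths, $Z_{\tau_n}\to Z_\tau$ almost surely. It remains to exchange limit and expectation, i.e.\ to show $\E[Z_\tau]=\lim_n\E[Z_{\tau_n}]$.

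\textbf{Step 3 (the main obstacle: uniform integrability).} Fatou's lemma is unavailable because $Z$ need not be bounded below. I would instead use the reversed structure. Since $\tau_{n+1}\le\tau_n$, Step 1 gives $\E[Z_{\tau_n}\mid\mathcal F_{\tau_{n+1}}]\le Z_{\tau_{n+1}}$, so $(Z_{\tau_n})_n$ is a backward supermartingale. Its expectations increase as $n$ grows and are bounded above by $\E[Z_0]$.

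I would then invoke the standard fact that such a backward supermartingale is uniformly integrable. The proof I would give splits into the two tails. The negative parts are controlled by $Z_{\tau_n}\ge\E[Z_{\tau_m}\mid\mathcal F_{\tau_n}]$ for a fixed large $m\le n$. The positive parts are controlled using the monotone convergence of $\E[Z_{\tau_n}]$ to a finite limit, which lets one pick $m$ so that $\E[Z_{\tau_m}]$ is within $\varepsilon$ of $\lim_n\E[Z_{\tau_n}]$. With uniform integrability, Vitali's theorem gives $\E[Z_\tau]=\lim_n\E[Z_{\tau_n}]\le\E[Z_0]$.

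This last step is the only delicate point, and I expect it to be the main work. The alternative route through localization of the stochastic integral $\int\Sigma\,dW$ would need a lower bound on $Z$ to pass to the limit, which is exactly the extra hypothesis this argument is designed to avoid.
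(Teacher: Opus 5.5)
The paper does not prove this statement; it quotes optional stopping as a standard tool, so there is no competing argument to compare with. Your proof is the standard textbook argument: discrete optional stopping, approximation of $\tau$ from above by dyadic stopping times, uniform integrability of the resulting backward supermartingale, then Vitali. It is correct for a continuous supermartingale in the usual sense.

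One routine detail should be made explicit in Step 3. The positive-tail estimate needs $\sup_n \Pr[Z_{\tau_n}>a]\to 0$ as $a\to\infty$. This follows from the bound $\E[Z_{\tau_n}^+]=\E[Z_{\tau_n}]+\E[Z_{\tau_n}^-]\le \E[Z_0]+\E[Z_{\tau_0}^-]$, which is just your negative-part bound with $m=0$.

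The more substantive point is your added hypothesis. You present integrability and $\E[Z_t\mid\mathcal F_s]\le Z_s$ as part of what ``supermartingale'' means. The paper's definition, however, is only $b_t\le 0$ together with $\int_0^T\|\Sigma_t\|_{\mathsf{HS}}^2\,dt<\infty$ almost surely. That yields only a local supermartingale, and for such processes the statement is false.

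Here is a counterexample. Let $B$ be a three-dimensional Brownian motion started at $x_0\neq 0$, and set $Z_t:=-1/|B_t|$. Since $1/|x|$ is harmonic away from the origin and $B$ almost surely avoids the origin, $Z$ is an It\^o process with zero drift. But $1/|B_t|$ is a strict local martingale, so $\E[Z_T]>Z_0$, and optional stopping fails already for $\tau=T$.

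So your extra assumption is a genuinely needed strengthening, not merely a reading of the paper's definition. It costs the paper nothing. The theorem is only applied to $\exp(\lambda Z_t)$ in \Cref{prop:freedman_Ito}, and through it to $\exp(2\delta\Phi_t)$ in \Cref{thm:decoupling-bound}, and both are nonnegative. A nonnegative It\^o process with $b_t\le 0$ is a true supermartingale: localize the stochastic integral, then apply conditional Fatou. This is exactly the lower-bound route you set aside. With that one-line upgrade recorded, your argument covers every use of the theorem in the paper.
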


\subsection{$(\alpha,\theta)$-Independent It\^o Process and a Decoupling Bound}
\label{subsec:decoupling}
We first recall a basic large deviation bound for
a real-valued It\^o process with negative drift.
A discrete analogue of this bound appears as Lemma~2.2 of \cite{Ban24}.

\begin{proposition}[Freedman-type inequality]
\label{prop:freedman_Ito}
Let $(Z_t)_{0\le t\le T}$ be a real-valued It\^o process with increment $dZ_t = \sigma_t^{\top}\,dW_t + b_t\,dt$, where $\sigma_t$ may be vector-valued. Let $\delta >0$ and 
suppose that $b_t \le -\delta \|\sigma_t\|_2^2$  for all $t\in[0,T]$ almost surely. Then, for every $\xi\ge 0$,
\[
\Pr \Big[\sup_{0\le t\le T}(Z_t-Z_0)\ge \xi\Big]\le \exp(-2\delta \xi).
\]
\end{proposition}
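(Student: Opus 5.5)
We will use the exponential supermartingale method. Set $\lambda := 2\delta$ and consider $M_t := \exp(\lambda (Z_t - Z_0))$. Applying the It\^o formula (\Cref{thm:Ito-formula}) to $f(z)=e^{\lambda(z-Z_0)}$, we get
\[
dM_t = M_t\Bigl(\lambda\,\sigma_t^\top dW_t + \lambda b_t\,dt + \tfrac{\lambda^2}{2}\|\sigma_t\|_2^2\,dt\Bigr).
\]
The drift is $M_t\lambda\bigl(b_t + \tfrac{\lambda}{2}\|\sigma_t\|_2^2\bigr)$. This equals $M_t\lambda\bigl(b_t+\delta\|\sigma_t\|_2^2\bigr)\le 0$ by the hypothesis $b_t\le -\delta\|\sigma_t\|_2^2$. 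So $M_t$ is a nonnegative supermartingale with $M_0=1$. The choice $\lambda=2\delta$ is exactly the largest exponent for which the negative drift absorbs the It\^o correction term, and it produces the claimed rate $\exp(-2\delta\xi)$.

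Next, define the stopping time $\tau := \inf\{t\in[0,T] : Z_t - Z_0 \ge \xi\}\wedge T$. This is a stopping time because $Z$ is continuous and adapted, and $0\le\tau\le T$. By the optional stopping theorem, $\E[M_\tau]\le \E[M_0]=1$. On the event $\{\sup_{0\le t\le T}(Z_t-Z_0)\ge\xi\}$, continuity gives $Z_\tau - Z_0 = \xi$ (or $\ge \xi$ if $\xi=0$), so $M_\tau\ge e^{\lambda\xi}$. Since $M_\tau\ge 0$ everywhere, Markov's inequality yields
\[
\Pr\Bigl[\sup_{0\le t\le T}(Z_t-Z_0)\ge\xi\Bigr]\le \Pr[M_\tau\ge e^{2\delta\xi}]\le e^{-2\delta\xi}.
\]

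The only delicate point is technical: whether $M_t$ is a genuine supermartingale, as opposed to a local one, so that optional stopping applies. The paper's definition calls an It\^o process a supermartingale as soon as its drift is nonpositive, so with the stated optional stopping theorem we may apply it directly. For a fully rigorous treatment I would localize. Take stopping times $\tau_k\uparrow T$ that make the stochastic integral $\int M_s\lambda\sigma_s^\top dW_s$ a true martingale, which is possible because $\int_0^T\|\sigma_t\|_2^2dt<\infty$ and $M$ is continuous. This gives $\E[M_{\tau\wedge\tau_k}]\le 1$ for each $k$, and Fatou's lemma, using $M\ge0$, passes to the limit to give $\E[M_\tau]\le1$.
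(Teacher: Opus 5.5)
Your proof is correct and is essentially the paper's argument: the exponential process with $\lambda = 2\delta$ has nonpositive drift, optional stopping is applied at the first time $Z_t - Z_0$ reaches $\xi$ (capped at $T$), and nonnegativity gives the bound, with your normalization by $e^{-\lambda Z_0}$ replacing the paper's conditioning on $Z_0$. Your localization-plus-Fatou step is a good addition, since it justifies $\E[M_\tau]\le 1$ for what is a priori only a nonnegative local supermartingale, a point the paper sidesteps through its definition of a supermartingale.
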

\begin{proof}
We may assume that $Z_0$ is deterministic, by conditioning on its value. Set $\lambda:=2\delta$ and define $X_t := \exp(\lambda Z_t)$ for $0\le t\le T$. 
By It\^o's formula, 
\[
dX_t=
\lambda  X_t\,dZ_t+ \frac{\lambda^2}{2} X_t\,d [Z,Z]_t = \lambda X_t \sigma_t^{\top}\,dW_t
+\lambda X_t\left( b_t+ \delta \|\sigma_t\|_2^2\right)dt.
\]
As $b_t\le -\delta \|\sigma_t\|_2^2$, it follows that $X_t$ is a supermartingale.  
Let $
\tau := \inf\{t\in[0,T]: Z_t-Z_0\ge \xi\}$. 

By the optional stopping theorem applied to $\tau\wedge T$, we have $\mathbb E[X_{\tau\wedge T}] \le \mathbb E[X_0]=\exp(\lambda Z_0)$.
As $X_{\tau\wedge T} = \exp(\lambda(Z_0+\xi))$ on the event $\{\tau \le T\}$, 
and $X_{\tau \wedge T} \geq 0$ always,  
this gives
\[
\exp(\lambda(Z_0+\xi))\,\Pr (\tau \le T) \leq \mathbb E[X_{\tau\wedge T}] \le \exp(\lambda Z_0).
\] 
Dividing by $\exp(\lambda(Z_0+\xi))$ finishes the proof, as $\{\tau\le T\} = \{\sup_{0\le t\le T}(Z_t-Z_0)\ge \xi\}$ and $\lambda = 2\delta$.
\end{proof}

We will be interested in vector-valued processes $Z =(Z_t)_{0\le t\le T} \subset \R^m$, where
each coordinate $Z_t(i)$ has a negative drift, and the martingale parts evolve almost independently in the following sense.

\begin{definition}[$(\alpha,\theta)$-independent It\^o process] \label{def:alpha-theta-independence}
Let $Z =(Z_t)_{0\le t\le T} \subset \R^m$ be an It\^o process
with increment $dZ_t = \Sigma_t\,dW_t + b_t\,dt$.  
Write $C_t := \Sigma_t\Sigma_t^\top \in \mathbb R^{m\times m}$.
Let $\alpha\ge 1$ and $\theta>0$. We say that $Z$ is an \emph{$(\alpha,\theta)$-independent
It\^o process} if the following holds for all $t\in[0,T]$ almost surely:
\begin{enumerate}
\item[(i)] (\emph{Almost pairwise independence}) $C_t \preceq \alpha \, \diag(C_t)$.
\item[(ii)] (\emph{Coordinate-wise negative drift}) For every $i\in[m]$, we have $b_t(i)\le -\theta\, C_t(i,i)$.
\end{enumerate}
\end{definition}
We call the first condition almost pairwise independence since it is equivalent to saying that for all  test vectors $u\in \R^m$,
the martingale part $dM_t = \Sigma_t dW_t$ of $dZ_t$ satisfies 
\[ \E \big[\big(\sum_i u(i) dM_t(i)\big)^2\big] =  \E[ (u^{\top} dM_t)^2] = u^{\top} C_t u dt \leq \alpha \, u^{\top}\diag(C_t)udt = \alpha \sum_i u(i)^2 \E[dM_t(i)^2].\] 
Also note that any process $Z_t$ that satisfies $d Z_t = 0$ for all $0 \leq t \leq T$ is clearly an  $(\alpha,\theta)$-independent It\^o process, for any $\alpha \geq 1$ and $\theta > 0$.

We prove the following decoupling bound for such $(\alpha,\theta)$-independent It\^o processes.

\begin{theorem}[Decoupling bound]  \label{thm:decoupling-bound}
Let $Z =(Z_t)_{0\le t\le T}$ be an $\R^m$-valued $(\alpha,\theta)$-independent
It\^o process with increment $d Z_t = \Sigma_t d W_t + b_t d t$. 
Let $B>0$, 
and call a coordinate $i\in[m]$ \emph{bad} if
\[
\sup_{0\le t\le T} \big(Z_t(i) - Z_0(i) \big) \ge B.
\]
Let $N_{\mathsf{bad}}$ be the number of bad coordinates.
Then for every $0 < \lambda \leq \theta$
and every $0 < \gamma < 1$, 
\[
\Pr \Big[
N_{\mathsf{bad}}
\ge
m e^{-\lambda B}
+
\frac{\alpha\lambda}{\theta}\log(1/\gamma)\Big]
\le \gamma.
\]
\end{theorem}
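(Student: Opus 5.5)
The plan is to run an exponential potential over all coordinates at once and apply \Cref{prop:freedman_Ito} to that single real-valued process. The additive term $\frac{\alpha\lambda}{\theta}\log(1/\gamma)$ should come from the Freedman tail, and $me^{-\lambda B}$ from the $-1$ offsets of the potential.

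\textbf{Stopping and the potential.} For each $i$, let $\tau_i:=\inf\{t: Z_t(i)-Z_0(i)\ge B\}\wedge T$. Set $Y_t(i):=Z_{t\wedge\tau_i}(i)-Z_0(i)$, so that $Y_t(i)\le B$ always. The stopped process $Y$ has increment $\mathbf 1[t<\tau_i]\,dZ_t(i)$. Its covariance rate is $D_tC_tD_t$, where $D_t$ is the diagonal $0/1$ matrix of still-active coordinates. Conditions (i) and (ii) of \Cref{def:alpha-theta-independence} pass to this conjugated matrix and the masked drift, so $Y$ is again $(\alpha,\theta)$-independent. Define
\[
\Psi_t:=\sum_{i=1}^m\bigl(e^{\lambda Y_t(i)}-1\bigr),\qquad \Psi_0=0 .
\]

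\textbf{It\^o computation.} By \Cref{thm:Ito-formula}, the martingale part of $d\Psi_t$ is $\lambda u_t^\top\Sigma_t\,dW_t$ with $u_t(i)=e^{\lambda Y_t(i)}$ (restricted to active coordinates). The drift is
\[
\sum_i e^{\lambda Y_t(i)}\Bigl(\lambda b_t(i)+\tfrac{\lambda^2}{2}C_t(i,i)\Bigr)\le \sum_i e^{\lambda Y_t(i)}\Bigl(-\lambda\theta+\tfrac{\lambda^2}{2}\Bigr)C_t(i,i)\le -\tfrac{\lambda\theta}{2}\sum_i e^{\lambda Y_t(i)}C_t(i,i),
\]
where the last step uses $\lambda\le\theta$. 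For the quadratic variation rate, almost pairwise independence and $Y_t(i)\le B$ give
\[
\lambda^2u_t^\top C_tu_t\le\alpha\lambda^2\sum_i e^{2\lambda Y_t(i)}C_t(i,i)\le \alpha\lambda^2e^{\lambda B}\sum_i e^{\lambda Y_t(i)}C_t(i,i).
\]
Hence the drift is at most $-\delta\|\sigma_t\|_2^2$ with $\delta=\theta/(2\alpha\lambda e^{\lambda B})$. \Cref{prop:freedman_Ito} then yields
\[
\Pr[\Psi_T\ge\xi]\le\exp\!\bigl(-\theta\xi/(\alpha\lambda e^{\lambda B})\bigr).
\]

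\textbf{Counting.} Each bad coordinate has $Y_T(i)=B$ and contributes $e^{\lambda B}-1$ to $\Psi_T$. Every other coordinate contributes at least $-1$. Therefore $\Psi_T\ge N_{\mathsf{bad}}e^{\lambda B}-m$. Choose $\xi=\frac{\alpha\lambda}{\theta}e^{\lambda B}\log(1/\gamma)$. The event $N_{\mathsf{bad}}\ge me^{-\lambda B}+\frac{\alpha\lambda}{\theta}\log(1/\gamma)$ forces $\Psi_T\ge\xi$, which has probability at most $\gamma$.

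\textbf{Main obstacle.} The delicate step is bounding the quadratic variation of $\Psi$ by a constant multiple of its negative drift. This needs both almost pairwise independence, to decouple the cross terms, and the truncation $Y\le B$, to trade $e^{2\lambda Y}$ for $e^{\lambda B}e^{\lambda Y}$. This is exactly why the per-coordinate stopping is introduced, and it must be checked that stopping preserves condition (i).
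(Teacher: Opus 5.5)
Your proposal is correct and follows the paper's proof essentially verbatim: the same per-coordinate stopping times and the same exponential potential (yours is the paper's $\Phi_t=\sum_i X_t(i)$ shifted by $-m$). It also uses the same drift versus quadratic-variation comparison giving $\delta=\theta/(2\alpha\lambda e^{\lambda B})$, and the same application of \Cref{prop:freedman_Ito} followed by the counting step $N_{\mathsf{bad}}\le e^{-\lambda B}\Phi_T$.
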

\begin{remark}
Observe that the parameter $\alpha$ only affects the additive term in the bound for $N_{\mathsf{bad}}$, and not the first term $m e^{-\lambda B}$, which is roughly the expected number of bad coordinates, even if the martingale parts are fully independent (i.e., $\alpha=1$). We will use this crucially in our results later.
\end{remark}
\begin{proof} 
For each coordinate $i\in[m]$, define the stopping time $
\tau_i := \inf\{t: Z_t(i) - Z_0(i)\ge B\}\wedge T.$
Fix $0 < \lambda \leq \theta$, and define
\[
X_t(i):=\exp\Bigl(\lambda \big(Z_{t\wedge \tau_i}(i) - Z_0(i) \big)\Bigr) \quad \text{and} \quad 
\Phi_t:=\sum_{i=1}^m X_t(i).
\] 
Clearly $X_t(i)\leq e^{\lambda B}$, and a bad coordinate $i$ contributes exactly $X_T(i)=e^{\lambda B}$ to $\Phi_T$. Therefore, 
\begin{align} \label{eq:N_bad_by_Phi_T}
N_{\mathsf{bad}} \le e^{-\lambda B} \Phi_T.
\end{align}
Our goal will be to bound $\Phi_T$ by applying \Cref{prop:freedman_Ito} suitably.

Let $M_t(i):=\int_0^t \Sigma_s(i,\cdot)\,dW_s$ be the martingale part of $Z_t(i)$, so that $dZ_t(i)=dM_t(i)+b_t(i)\,dt$. 
Applying It\^o's formula to 
$Z_{t\wedge\tau_i}(i)$ gives,
\[
dX_t(i)
=
\mathbf 1_{\{t<\tau_i\}} X_t(i)
\left( \lambda\, dM_t(i) + \Bigl(\lambda b_t(i)+\frac{\lambda^2}{2}C_t(i,i)\Bigr)dt
\right), 
\]
where $C_t = \Sigma_t \Sigma_t^\top$ as before.  
Summing over $i$ gives the decomposition
$
d \Phi_t = dM_t^{(\Phi)} + b_t^{(\Phi)}\,dt,
$
where
\[
dM_t^{(\Phi)}
=
\lambda\sum_{i=1}^m w_i\, dM_t(i) \quad \text{and} \quad  b_t^{(\Phi)} = \sum_{i=1}^m
w_i
\Bigl(\lambda b_t(i)+\frac{\lambda^2}{2}C_t(i,i)\Bigr),
\]
where we write $w_i  := w_i(t) = \mathbf 1_{\{t<\tau_i\}} X_t(i)$ for all $i \in [m]$ for ease of notation.

As $Z$ is $(\alpha, \theta)$-independent, it satisfies $b_t(i)\le -\theta C_t(i,i)$. Further, as $0 < \lambda \leq \theta$, it follows that 
\[
\lambda b_t(i)+\frac{\lambda^2}{2} C_t(i,i)
\le
\left(-\lambda\theta+\frac{\lambda^2}{2}\right) C_t(i,i) \leq  -\frac{\lambda\theta}{2} C_t(i,i).
\]
Therefore, $\Phi_t$ has a negative drift 
\begin{align} \label{eq:Phi-negative_drift}
b_t^{(\Phi)}
\le
-\frac{\lambda\theta}{2}
\sum_{i=1}^m \mathbf 1_{\{t<\tau_i\}} X_t(i)\, C_t(i,i) = -\frac{\lambda\theta}{2} \sum_i w_i C_t(i,i).
\end{align}
We now bound the quadratic variation of $M_t^{(\Phi)}$.  As $d[ M(i),M(j)]_t = C_t(i,j)\,dt$, we have
\begin{align} \label{eq:Phi-quadratic_var}
d[M^{(\Phi)},M^{(\Phi)}]_t
& 
= \lambda^2 \sum_{i,j=1}^m w_iw_j C_t(i,j) d t  \leq \lambda^2 \alpha \sum_{i=1}^m w_i^2 C_t(i,i) d t \leq \alpha\lambda^2 e^{\lambda B}
\sum_{i=1}^m w_i C_t(i,i) dt ,
\end{align} 
where the first inequality follows as $C_t \preceq \alpha \cdot \diag(C_t)$, and the second as $w_i \leq X_t(i)\leq  e^{\lambda B}$ for all $t$.

Combining  
\eqref{eq:Phi-negative_drift} and \eqref{eq:Phi-quadratic_var}, we get that  
$\Phi_t$ satisfies 
$b_t^{(\Phi)}\,dt
\le
-\delta\, d[M^{(\Phi)},M^{(\Phi)}]_t$ for 
$\delta:=\theta/(2\alpha\lambda e^{\lambda B})$.
Thus \Cref{prop:freedman_Ito} gives that for every $\xi\ge 0$,
\[
\Pr \Big[\sup_{0\le t\le T}(\Phi_t- \Phi_0)\ge \xi\Big]
\le e^{-2\delta \xi}.
\]
As $\Phi_0 = m$,
setting $\xi := (1/2\delta)\log (1/\gamma) =
(\alpha\lambda e^{\lambda B}/\theta)\log (1/\gamma)$ gives 
\[
\Pr \Big[\Phi_T \ge m + \frac{\alpha\lambda e^{\lambda B}}{\theta}\log (1/\gamma) \Big]
\leq \gamma.
\]
Using the bound $N_{\mathsf{bad}}\le e^{-\lambda B} \Phi_T$ in \eqref{eq:N_bad_by_Phi_T}, we obtain the claimed result.
\end{proof}

\subsection{Regularized Discrepancy}
\label{subsec:regularized_disc}

Let $(x_t)_{0 \leq t \leq T} \subset [-1,1]^n$ be an It\^o process (for the coloring) with increment $dx_t = U_t^{1/2}\,dW_t$,  where $W_t$ is a standard Brownian motion and $U_t$ are progressively measurable PSD matrices.

Instead of tracking how the discrepancy vector $Ax_t$ evolves with $x_t$, it will be convenient to work with a ``regularized'' discrepancy. 
In particular, under suitable conditions on $U_t$, the regularized discrepancy vector will be an $(\alpha,\theta)$-independent process for some choices of $\alpha$ and $\theta$.

Define the {\em regularized discrepancy} vector $Y_t \in \R^m$ as follows. For each $i \in [m]$, define 
\begin{align} \label{eq:reg-disc-prelim}
Y_t(i) := \langle a_i,x_t\rangle + \beta \sum_{j=1}^n a_i(j)^2\bigl(1-x_t(j)^2\bigr) ,
\end{align}
where $\beta \ge 0$ is a parameter. 
It is convenient to introduce a matrix $E_t \in \R^{m \times n}$, a slight modification of $A$, whose $i$th row is given by 
\begin{align} \label{eq:E_t-reg-disc}
E_{t}(i,\cdot):=a_i-2\beta\,(a_i^{\odot 2}\odot x_t)\in\mathbb R^n.
\end{align}
For $dx_t = U_t^{1/2}dW_t$, we have the following.
\begin{lemma}[Increment of $Y_t$] \label{lem:reg-disc-increment}
The process $Y_t$ is an It\^o process with increment 
\[
dY_t(i) = \langle E_t(i,\cdot), \, U_t^{1/2} dW_t \rangle -\beta\langle a_i,\diag(U_t) a_i\rangle\,dt.
\]
Consequently, its quadratic variation has increment $d [Y,Y]_t = E_tU_t E_t^\top dt$. 
\end{lemma}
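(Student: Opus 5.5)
The plan is to apply the It\^o formula (\Cref{thm:Ito-formula}) coordinate by coordinate to the smooth function
\[
f_i(x) := \langle a_i, x\rangle + \beta \sum_{j=1}^n a_i(j)^2\bigl(1-x(j)^2\bigr),
\]
so that $Y_t(i) = f_i(x_t)$. The function $f_i$ is a quadratic polynomial on $\R^n$, hence twice continuously differentiable. The process $x_t$ is an It\^o process with zero drift and diffusion matrix $U_t^{1/2}$, so $d[x,x]_t = U_t\,dt$. The integrability condition $\int_0^T \|U_t^{1/2}\|_{\mathsf{HS}}^2\,dt = \int_0^T \Tr(U_t)\,dt<\infty$ is part of the standing assumption that $x_t$ is an It\^o process, so nothing extra needs checking.

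First compute the derivatives. The gradient is $\nabla f_i(x) = a_i - 2\beta\,(a_i^{\odot 2}\odot x)$, which is exactly $E_t(i,\cdot)$ from \eqref{eq:E_t-reg-disc} when evaluated at $x = x_t$. The Hessian is the constant diagonal matrix $\nabla^2 f_i = -2\beta\,\diag(a_i^{\odot 2})$.

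Substituting into the It\^o formula gives
\[
dY_t(i) = \langle E_t(i,\cdot), U_t^{1/2}dW_t\rangle + \tfrac12 \Tr\bigl(-2\beta\,\diag(a_i^{\odot 2})\,U_t\bigr)\,dt .
\]
Because the Hessian is diagonal, the trace only sees the diagonal of $U_t$:
\[
\Tr\bigl(\diag(a_i^{\odot 2})\,U_t\bigr) = \sum_j a_i(j)^2\,U_t(j,j) = \langle a_i, \diag(U_t)\,a_i\rangle .
\]
Hence the drift is $-\beta\langle a_i,\diag(U_t)a_i\rangle$, which is the claimed formula.

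For the quadratic variation, stack the rows to write $dY_t = E_t U_t^{1/2}\,dW_t + b_t\,dt$. Then
\[
d[Y,Y]_t = E_t U_t^{1/2}\bigl(E_t U_t^{1/2}\bigr)^{\top} dt = E_t U_t E_t^{\top}\,dt,
\]
using that $U_t^{1/2}$ is symmetric. Finally, $Y$ is itself an It\^o process: $E_t$ is progressively measurable and bounded, since $|x_t(j)|\le 1$, so the required integrability of the diffusion and drift coefficients is inherited from that of $U_t$.

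There is no real obstacle here. The only step needing care is reducing the Hessian trace to the diagonal of $U_t$ and keeping the factor $\tfrac12\cdot(-2\beta) = -\beta$ correct.
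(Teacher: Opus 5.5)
Your proposal is correct and follows the paper's proof: both apply the It\^o formula to $Y_t(i)=f_i(x_t)$, identify the gradient at $x_t$ with $E_t(i,\cdot)$, and reduce the trace term of the diagonal Hessian to the drift $-\beta\langle a_i,\diag(U_t)a_i\rangle$. The quadratic variation $E_tU_tE_t^\top$ then follows by stacking the rows, as you did.
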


\begin{proof}
Using the It\^o formula in \Cref{thm:Ito-formula}, we can compute
\begin{align*}
d Y_t(i) 
& = \langle a_i, \, dx_t  \rangle - 2 \beta \sum_{j=1}^n a_i(j)^2 x_t(j) d x_t(j) - \beta \sum_{j=1}^n a_i(j)^2  (d x_t(j))^2 \\
& =\sum_{j=1}^n \left(( a_i(j) - 2 \beta  a_i(j)^2 x_t(j)) (U_t^{1/2} dW_t)(j)\right) - \beta \sum_{j=1}^n a_i(j)^2  U_t(j,j) d t \\
& = \langle a_i - 2 \beta (a_i^{\odot 2} \odot x_t), \, U_t^{1/2} dW_t\rangle - \beta  \langle a_i,\diag(U_t) a_i\rangle\,dt , 
\end{align*}
which gives the first statement. 
The quadratic variation formula follows immediately. 
\end{proof}

\begin{proposition}[Weak independence of $Y_t$] \label{prop:weak_ind_reg-disc} 
Suppose that $U_t$ satisfies the following two conditions for all $0\leq t \leq T$, almost surely.  
\begin{enumerate}
\item[(i)] (Affine spectral independence) $E_tU_tE_t^\top \preceq \alpha \, \diag(E_tU_tE_t^\top)$ , 
\item[(ii)] (Spectral independence) $U_t \preceq \eta \, \diag(U_t)$.
\end{enumerate}
Moreover, suppose that $\beta \|a_i\|_\infty \leq 1/2$ for all $i \in [m]$.
Then
 \(Y_t\) is an $(\alpha,\frac{\beta}{4\eta})$-independent It\^o process.
\end{proposition}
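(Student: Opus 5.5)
By \Cref{lem:reg-disc-increment}, $Y_t$ is an Itô process with $C_t = E_tU_tE_t^\top$ and drift $b_t(i) = -\beta\langle a_i,\diag(U_t)a_i\rangle$. We check the two conditions of \Cref{def:alpha-theta-independence}. Condition (i), almost pairwise independence with parameter $\alpha$, is exactly hypothesis (i), so it holds as given. The real content is condition (ii). We must show that for every $i$,
\[
\beta\langle a_i,\diag(U_t)a_i\rangle \ \ge\ \frac{\beta}{4\eta}\, C_t(i,i) = \frac{\beta}{4\eta}\, e_i^\top U_t e_i,
\]
where we abbreviate $e_i := E_t(i,\cdot)^\top$. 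Equivalently, we need $e_i^\top U_t e_i \le 4\eta \sum_j a_i(j)^2 U_t(j,j)$.

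We prove this in two steps. First, hypothesis (ii) gives
\[
e_i^\top U_t e_i \le \eta\, e_i^\top \diag(U_t)\, e_i = \eta\sum_j e_i(j)^2 U_t(j,j).
\]
Second, we compare $e_i(j)$ with $a_i(j)$ entrywise. We have $e_i(j) = a_i(j)\bigl(1 - 2\beta a_i(j)x_t(j)\bigr)$. Since $x_t\in[-1,1]^n$ and $\beta\|a_i\|_\infty\le 1/2$, we get $|2\beta a_i(j)x_t(j)|\le 1$. Hence $|1-2\beta a_i(j)x_t(j)|\le 2$, and so $e_i(j)^2\le 4a_i(j)^2$. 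Substituting this into the first bound, and using $U_t(j,j)\ge 0$ because $U_t$ is PSD, yields $C_t(i,i)\le 4\eta\langle a_i,\diag(U_t)a_i\rangle$. Multiplying by $\beta/(4\eta)$ gives $b_t(i)\le -\frac{\beta}{4\eta}C_t(i,i)$, which is condition (ii) with $\theta = \beta/(4\eta)$.

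The only step needing care is this entrywise comparison of $E_t$ with $A$, where $x_t\in[-1,1]^n$ and the bound $\beta\|a_i\|_\infty\le 1/2$ are used. We also need $\alpha\ge1$, which holds automatically since the trace of $C_t$ equals the trace of $\diag(C_t)$. If $C_t = 0$, condition (i) holds for any $\alpha\ge 1$, so we simply take the given $\alpha$. The integrability requirements of an Itô process are inherited from those of $x_t$, since $E_t$ is bounded.
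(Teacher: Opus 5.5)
Your proposal is correct and follows essentially the same argument as the paper. Condition (i) is read off directly from hypothesis (i), and the drift condition follows from spectral independence together with the entrywise bound $|E_t(i,j)| \le 2|a_i(j)|$, which gives $C_t(i,i) \le 4\eta\, a_i^\top \diag(U_t)\, a_i$.
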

\begin{proof}
Recall the definition of $(\alpha,\theta)$-independent It\^o process in \Cref{def:alpha-theta-independence}. To verify the almost pairwise independence condition, note that by \Cref{lem:reg-disc-increment}, the matrix $C_t$ in \Cref{def:alpha-theta-independence} is given by $C_t = d [Y,Y]_t/dt = E_t U_t E_t^\top$, which clearly satisfies $C_t \preceq \alpha \, \diag(C_t)$ by the assumption (i) above. 

We now verify the second condition (coordinate-wise negative drift).
By \Cref{lem:reg-disc-increment}, the $C_t(i,i)$ in \Cref{def:alpha-theta-independence} is given by
\[C_t(i,i) = d [Y(i), Y(i)]_t / dt = E_t(i,\cdot)^\top U_t E_t(i,\cdot) \leq \eta \,E_t(i,\cdot)^\top \diag(U_t) E_t(i,\cdot),\]
where the inequality follows by assumption (ii), and the drift $b_t(i)$ 
is given by $b_t(i) = - \beta a_i^\top \diag(U_t) a_i$.

Since $\beta \|a_i\|_\infty \leq 1/2$ and $\|x_t\|_\infty \leq 1$, each coordinate $j \in [n]$ of the vector $E_t(i,\cdot)$ satisfies
\[
|E_t(i,j)| = |a_i(j)| \cdot |(1 - 2 \beta a_i(j) x_t(j))| \leq 2 |a_i(j)| .
\]
Thus, 
$E_t(i,\cdot)^\top \diag(U_t) E_t(i,\cdot) \leq 4 a_i^\top \diag(U_t) a_i$,
 and we obtain that 
\[
b_t(i) = - \beta a_i^\top \diag(U_t) a_i \leq - \frac{\beta}{4} E_t(i,\cdot)^\top \diag(U_t) E_t(i,\cdot) \leq -\frac{\beta}{4 \eta} E_t(i,\cdot)^\top U_t E_t(i,\cdot) = - \frac{\beta}{4 \eta} C_t(i,i). 
\qedhere
 \]
\end{proof}

As an immediate corollary, \Cref{thm:decoupling-bound} implies the following decoupling bound for $Y_t$.

\begin{corollary}[Decoupling bound for $Y_t$] \label{cor:decoupling_reg-disc}
Under the assumptions of \Cref{prop:weak_ind_reg-disc}, we have the following.
Let $B > 0$, and denote by $N_{\mathsf{bad}} := \big|\bigl\{i\in[m]: \sup_{0\le t\le T} \big(Y_t(i) - Y_0(i) \big)\ge B\bigr\} \big|$ the number of bad coordinates. 
Then for every $0 < \lambda \leq \beta/(4 \eta)$
and $\gamma>0$, one has
\[
\Pr \Big[
N_{\mathsf{bad}}
\ge
m e^{-\lambda B}
+
\frac{4 \eta \alpha\lambda}{\beta}\log(1/\gamma)\Big]
\le \gamma.
\]
\end{corollary}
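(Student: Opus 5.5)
This corollary follows by direct substitution into \Cref{thm:decoupling-bound}; no new probabilistic argument is needed. First, by \Cref{prop:weak_ind_reg-disc}, the regularized discrepancy process $Y=(Y_t)_{0\le t\le T}\subset\R^m$ is an $(\alpha,\theta)$-independent It\^o process with $\theta:=\beta/(4\eta)$. Its increment is $dY_t = E_tU_t^{1/2}\,dW_t - \beta\,\bigl(\langle a_i,\diag(U_t)a_i\rangle\bigr)_{i\in[m]}\,dt$, as computed in \Cref{lem:reg-disc-increment}, so it has exactly the form required in \Cref{def:alpha-theta-independence} with $\Sigma_t = E_tU_t^{1/2}$ and $C_t = E_tU_tE_t^\top$.

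Second, I would apply \Cref{thm:decoupling-bound} to $Z:=Y$ with this $\theta$ and the given threshold $B$. The notion of a bad coordinate there, $\sup_{0\le t\le T}(Z_t(i)-Z_0(i))\ge B$, coincides verbatim with the one defining $N_{\mathsf{bad}}$ in the corollary. The range $0<\lambda\le\beta/(4\eta)$ is precisely $0<\lambda\le\theta$. The additive term becomes
\[
\frac{\alpha\lambda}{\theta}\log(1/\gamma)=\frac{4\eta\alpha\lambda}{\beta}\log(1/\gamma),
\]
which gives the claimed tail bound for $0<\gamma<1$.

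The only points needing a remark are bookkeeping ones. For $\gamma\ge 1$ the inequality is trivial, since any probability is at most $1\le\gamma$; so the statement's weaker hypothesis $\gamma>0$ costs nothing. The integrability conditions in the definition of an It\^o process should also be noted. They hold because $|E_t(i,j)|\le 2|a_i(j)|$ is bounded, $x_t$ is itself an It\^o process, and the drift is dominated by $\beta\Tr(U_t)\max_i\|a_i\|_\infty^2$. I expect no real obstacle: the main content was already carried out in \Cref{prop:weak_ind_reg-disc} and \Cref{thm:decoupling-bound}.
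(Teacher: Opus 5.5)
Your proposal is correct and follows the paper's own route: the paper treats this as an immediate corollary, using \Cref{prop:weak_ind_reg-disc} to show that $Y_t$ is $(\alpha,\beta/(4\eta))$-independent and then substituting $\theta=\beta/(4\eta)$ into \Cref{thm:decoupling-bound}. Your remarks that $\gamma\ge 1$ is trivial and that the It\^o integrability conditions hold are harmless bookkeeping that the paper leaves implicit; the latter is already covered by \Cref{lem:reg-disc-increment}.
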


\subsection{A Subgaussian Discrepancy Bound Attaining Banaszczyk's Result}
\label{subsec:Banaszczyk_alg}
As is well known, the spectral independence property (condition (ii) in Proposition \ref{prop:weak_ind_reg-disc}) already gives a subgaussian tail bound for the discrepancy $\langle a, x_t \rangle$ for any row. This bound corresponds to Banaszczyk's $O(\sqrt{\log n})$ bound for the Koml\'os problem. 

\begin{proposition}[Subgaussian discrepancy via spectral independence] \label{prop:spec-indep-subg-disc}
Let $(x_t)_{0\leq t\leq T}$ be an It\^o process with increment $dx_t = U_t^{1/2}\,dW_t$. If   $U_t \preceq \eta \, \diag(U_t)$ for every $0\leq t \leq T$, 
then for every $\gamma>0$ and every vector $ a\in\mathbb R^n$,
\[
\Pr \Big[
\sup_{0\leq t \leq T}\langle a, x_t-x_0\rangle
\ge
C_0 \,\|a\|_2\sqrt{\eta \log(1/\gamma)}
\Big]\leq \gamma,
\]
where $C_0 > 0$ is a universal constant. 
\end{proposition}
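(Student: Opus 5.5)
We will prove this by running the regularized-discrepancy argument of \Cref{subsec:regularized_disc} for a single row and applying \Cref{prop:freedman_Ito}. The one wrinkle is that the regularization needs $\beta\|a\|_\infty\le 1/2$, so we first peel off the large entries of $a$.

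\textbf{Reductions.} By homogeneity we may assume $\|a\|_2=1$. We may also assume $\gamma<1$, since otherwise the claim is trivial. Set $L:=\log(1/\gamma)$ and $\beta:=\sqrt{2\eta L}$.

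\textbf{Splitting off large coordinates.} Let $S:=\{j: |a_j|>1/(2\beta)\}$, and write $a=a'+a''$, where $a'$ is supported on $[n]\setminus S$ and $a''$ on $S$. The large part is controlled deterministically. For $j\in S$ we have $|a_j|\le 2\beta a_j^2$, so
\[
\sum_{j\in S}|a_j|\le 2\beta\|a\|_2^2=2\beta .
\]
Since $x_t,x_0\in[-1,1]^n$, this gives $|\langle a'',x_t-x_0\rangle|\le 2\sum_{j\in S}|a_j|\le 4\beta$ for all $t$. It therefore suffices to bound $\sup_t\langle a',x_t-x_0\rangle$, and now $\beta\|a'\|_\infty\le 1/2$.

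\textbf{Regularized process for $a'$.} Define $Z_t:=\langle a',x_t\rangle+\beta\sum_j a'(j)^2(1-x_t(j)^2)$. This is the one-row case of \eqref{eq:reg-disc-prelim}, and \Cref{lem:reg-disc-increment} applies to it. Let $e_t:=a'-2\beta(a'^{\odot 2}\odot x_t)$. Then
\[
dZ_t=\langle e_t,U_t^{1/2}dW_t\rangle-\beta\langle a',\diag(U_t)a'\rangle\,dt .
\]
The drift computation in the proof of \Cref{prop:weak_ind_reg-disc} uses only spectral independence and $\beta\|a'\|_\infty\le1/2$, with no affine condition. It gives drift $\le -\frac{\beta}{4\eta}\,e_t^\top U_te_t$, i.e.\ $-\delta\|\sigma_t\|_2^2$ with $\delta=\beta/(4\eta)$. \Cref{prop:freedman_Ito} then yields
\[
\Pr\big[\sup_t(Z_t-Z_0)\ge\xi\big]\le e^{-\beta\xi/(2\eta)} .
\]
Choosing $\xi=2\eta L/\beta=\sqrt{2\eta L}$ makes this probability equal to $\gamma$.

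\textbf{Conclusion.} Removing the regularizer costs at most $\beta\sum_j a'(j)^2\,|x_t(j)^2-x_0(j)^2|\le\beta$. Hence, with probability at least $1-\gamma$, for all $t$,
\[
\langle a,x_t-x_0\rangle\le \xi+\beta+4\beta=6\sqrt{2\eta L}.
\]
Undoing the normalization gives the claim with $C_0=6\sqrt2$.

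The only step needing care is the large-coordinate issue, where $\beta$ may exceed $1/(2\|a\|_\infty)$. The $\ell_1$ bound $\sum_{j\in S}|a_j|\le 2\beta\|a\|_2^2$ is what makes that part harmless. Everything else is a direct reuse of \Cref{lem:reg-disc-increment} and \Cref{prop:freedman_Ito}.
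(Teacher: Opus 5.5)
Your proposal is correct and follows essentially the same route as the paper. You split off the coordinates with $|a(j)|$ above a threshold of order $1/\beta$ and bound their $\ell_1$ mass by $O(\beta)$; you do this pointwise via $|a(j)|\le 2\beta\, a(j)^2$, where the paper uses Cauchy--Schwarz with the support-size bound, and then you apply the one-dimensional regularized discrepancy with \Cref{prop:weak_ind_reg-disc} (trivially $\alpha=1$) and \Cref{prop:freedman_Ito} at $\beta\asymp\sqrt{\eta\log(1/\gamma)}$. The only other differences are the constants and your explicit treatment of the trivial case $\gamma\ge 1$.
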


\begin{proof}
By rescaling, we can assume that $\|a\|_2=1$. Let us decompose $a=a_{\mathsf{big}}+a_{\mathsf{small}}$ where 
\[
a_{\mathsf{big}}(j):=a(j)\,\mathbf 1_{\{|a(j)|\ge \lambda^{-1}\}} \quad \text{and} \quad 
a_{\mathsf{small}}(j):=a(j)\,\mathbf 1_{\{|a(j)|< \lambda^{-1}\}} ,
\]
where the threshold $\lambda$ will be fixed later.

We first bound the contribution due to $a_{\mathsf{big}}$.
As  $\|a_{\mathsf{big}}\|_2 \leq \|a\|_2=1$, the support  of 
\(a_{\mathsf{big}}\) has size at most $\lambda^2$. Since $|x_t(j)-x_0(j)|\le 2$ for all $j$, applying Cauchy--Schwarz gives that
\begin{align}
\label{eq:big-contrib}
\sup_{0 \leq t \leq T} \langle a_{\mathsf{big}}, x_t-x_0\rangle
\le 2 \|a_{\mathsf{big}}\|_1 \leq 
2 \|a_{\mathsf{big}}\|_2  \cdot   |\mathsf{supp}(a_{\mathsf{big}})|^{1/2}\leq 2\lambda.
\end{align}
We now consider $a_{\mathsf{small}}$. 
Consider the ($1$-d) regularized discrepancy
\[
Y_t := \langle a_{\mathsf{small}},x_t\rangle + \beta \sum_{j=1}^n a_{\mathsf{small}}(j)^2\bigl(1-x_t(j)^2\bigr),\]
where we set $\beta = \lambda/2$ 
(so that $\beta \|a_{\mathsf{small}}\|_\infty \leq 1/2$ as needed in \Cref{prop:weak_ind_reg-disc}). 

Since $U_t \preceq \eta\,\diag(U_t)$, \Cref{prop:weak_ind_reg-disc} implies that $Y_t$ is a $(1,\beta/4\eta)$-independent It\^o process (assumption (i) in \Cref{prop:weak_ind_reg-disc} is satisfied trivially with $\alpha=1$ as $Y_t$ is $1$-dimensional).

Thus, applying \Cref{prop:freedman_Ito} with $\delta = \beta/(4\eta)$ and $\xi =(2\eta/\beta)\log(1/\gamma)$, we have that 
\[
\Pr\Big[\sup_{0\le t\le T}(Y_t- Y_0)\ge \frac{2\eta}{\beta} \log(1/\gamma)\Big] \leq \gamma. 
\]
As $\langle a_{\mathsf{small}}, x_t \rangle \leq  Y_t \leq \langle a_{\mathsf{small}}, x_t \rangle + \beta$ for any time $t$, this implies that with probability at least $1-\gamma$, 
\begin{align*}\label{eq:bana_small}
\sup_{0 \leq t \leq T} \langle a_{\mathsf{small}}, x_t - x_0\rangle \leq \beta + \frac{2\eta}{\beta} \log (1/\gamma) 
\end{align*}
 Together with \eqref{eq:big-contrib},  setting $\lambda = 2 \beta = \sqrt{\eta \log(1/\gamma)}$ gives the result.
\end{proof}

\subsection{Existence of Diffusion Matrices}
\label{subsec:feasibility_SDP}

The diffusion matrix $\Sigma_t$ in our It\^o process will be chosen to satisfy various conditions needed for the analysis. 
We show the existence of such $\Sigma_t$ via the feasibility of a semi-definite program (SDP) with matrix variable $U$, from which $\Sigma_t$ is obtained by normalizing $U^{1/2}$ (cf.\ \Cref{def:Sigma_t-Komlos}). 

\begin{restatable}[SDP Feasibility]
{theorem}{SubIsoVecMultiClass}\label{thm:sub-isotropic-SDP_Multi-Class}
Let $H \subset \R^h$ be a subspace with $\mathsf{dim}(H) \leq \delta h$, and let $E_s \in \mathbb{R}^{m_s \times h}$, for $s \in [q]$, be matrices with $m_s \leq r_s h$ rows, for some real numbers $r_s > 0$. 
Then for any $\kappa, \eta, \alpha_s > 0$, where $s \in [q]$, such that $\eta^{-1} + \kappa + \sum_{s=1}^q r_s \alpha_s^{-1} \leq 1-\delta$, there is an $h \times h$ PSD matrix $U$ satisfying the following:

(i) $\langle vv^\top, U \rangle = 0$ for all $v \in H$, 

(ii) $U(i,i) \leq 1$ for all $i \in [h]$, 

(iii) $\Tr(U) \geq \kappa h$, 

(iv) $U \preceq \eta \, \diag(U)$, and 

(v) $E_sU E_s^\top \preceq \alpha_s \, \diag(E_s U E_s^\top)$ for all $s \in [q]$.
\end{restatable}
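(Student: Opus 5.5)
The plan is to prove \Cref{thm:sub-isotropic-SDP_Multi-Class} by SDP duality combined with an eigenvalue-counting argument. Let $\Pi$ denote the orthogonal projection onto $H^\perp$. Condition (i) is equivalent to $U=\Pi U\Pi$, so we may parametrize $U$ in that form. Consider the SDP that maximizes $\Tr(U)$ over PSD $U$ with $U=\Pi U\Pi$ and conditions (ii), (iv), (v). The feasible set is nonempty (it contains $U=0$) and compact (PSD with diagonal at most $1$). It therefore suffices to show that the optimal value is at least $\kappa h$. Suppose instead that it is less than $\kappa h$.

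To pass to a dual certificate, I would avoid delicate Slater conditions by a minimax argument over the compact primal set, or by perturbing to strict inequalities and taking a limit. Dualizing gives multipliers:
\begin{itemize}
\item a nonnegative diagonal $D$ for (ii), with $\Tr(D)<\kappa h$;
\item a PSD $Y$ for (iv), written as $\langle \eta\,\diag(U)-U, Y\rangle = \langle \eta\,\diag(Y)-Y, U\rangle$;
\item PSD $Z_s$ for (v), written as $\langle \alpha_s \diag(E_sUE_s^\top)-E_sUE_s^\top, Z_s\rangle=\langle E_s^\top(\alpha_s\diag(Z_s)-Z_s)E_s, U\rangle$.
\end{itemize}
Dual feasibility then reads
\[
\Pi\Big(D + Y-\eta\,\diag(Y) + \sum_{s=1}^q E_s^\top\big(Z_s-\alpha_s\diag(Z_s)\big)E_s\Big)\Pi \succeq \Pi .
\]

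The heart of the proof is to exhibit a nonzero vector $v\in H^\perp$ at which the left-hand side is strictly smaller than $v^\top v$, which contradicts the displayed inequality. I will use the following counting fact. If $Y\succeq 0$, then $Y-\eta\,\diag(Y)\preceq 0$ on a subspace of codimension at most $h/\eta$. To see this, first restrict to the support $S$ of $\diag(Y)$; off $S$ the matrix $Y$ vanishes, because $Y$ is PSD and its diagonal is zero there. On $S$, normalize $Y'=\diag(Y)^{-1/2}Y\diag(Y)^{-1/2}$. Then $\Tr(Y')=|S|\le h$, so fewer than $h/\eta$ eigenvalues of $Y'$ exceed $\eta$. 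On the span of the remaining eigenvectors, pulled back through $\diag(Y)^{-1/2}$ and extended by the coordinates outside $S$, we get $Y\preceq\eta\,\diag(Y)$.

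The same argument applied to $Z_s\in\R^{m_s\times m_s}$ shows that $Z_s-\alpha_s\diag(Z_s)\preceq 0$ on a subspace of $\R^{m_s}$ of codimension at most $m_s/\alpha_s\le r_sh/\alpha_s$. Its preimage under $E_s$ has codimension at most $r_sh/\alpha_s$ in $\R^h$, and on that preimage $E_s^\top(Z_s-\alpha_s\diag Z_s)E_s\preceq 0$. Finally, since $\Tr(D)<\kappa h$, fewer than $\kappa h$ diagonal entries of $D$ are at least $1$. On the coordinate subspace where $D_{jj}<1$, which has codimension less than $\kappa h$, we have $v^\top D v<\|v\|^2$ for every $v\neq 0$.

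Now intersect $H^\perp$ with all of these subspaces. The total codimension is strictly less than $\delta h+h/\eta+\kappa h+\sum_s r_sh/\alpha_s\le h$ by the hypothesis $\eta^{-1}+\kappa+\sum_{s=1}^q r_s\alpha_s^{-1}\le 1-\delta$. The intersection therefore contains a nonzero $v$. Since $v\in H^\perp$ we have $\Pi v=v$, and evaluating the dual inequality at $v$ gives
\[
\|v\|^2\le v^\top D v + v^\top\big(Y-\eta\,\diag(Y)\big)v + \sum_{s=1}^q v^\top E_s^\top\big(Z_s-\alpha_s\diag(Z_s)\big)E_s v < \|v\|^2,
\]
a contradiction.

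I expect the main obstacle to be the duality step itself. One has to get the dual cleanly, including the strict inequality $\Tr(D)<\kappa h$ that is needed for the strict count, without assuming strong duality. If minimax is awkward, I would instead apply a separating hyperplane between the compact convex set $\{(\diag(U),\Tr U,\ldots)\}$ and the target set, and read off the multipliers directly.
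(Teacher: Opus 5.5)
Your proposal is correct and follows essentially the paper's route. You dualize the SDP, use the $\diag^{-1/2}$-normalized eigenvalue count (trace at most the dimension) to find large subspaces on which $Y-\eta\,\diag(Y)$ and $E_s^\top(Z_s-\alpha_s\diag(Z_s))E_s$ are negative semidefinite, and intersect these with $H^\perp$. The one difference is cosmetic: you argue by contradiction and add the coordinate subspace $\{D_{jj}<1\}$ to obtain a single violating vector, whereas the paper tests the dual constraint on $\lceil\kappa h\rceil$ orthonormal vectors of the intersection to show that every dual feasible value is at least $\kappa h$.

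The duality step you flag as the main obstacle is routine. Dualize (i) with free multipliers on $vv^\top$, rather than building it into $U=\Pi U\Pi$, which makes the dual constraint degenerate on $H$. The dual is then strictly feasible (for example, identity multipliers for (iv) and (v) and a large multiple of $I$ for (ii)), so standard conic strong duality applies; this is exactly how the paper proceeds.
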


These conditions on $U$ correspond to the conditions in the definition of the diffusion matrix $\Sigma_t$ in \Cref{def:Sigma_t-Komlos}, where we also explain what each constraint implies about the process. 

\begin{proof}
Consider the following SDP
\begin{equation}
\label{eq:sdp_primal} \tag{Primal SDP}
\begin{aligned}
    \max \quad & \Tr(U) \\
    \textrm{s.t.} \quad & \langle U ,\, vv^\top\rangle = 0 \ , && \text{for all } v \in H ,\\
    & \langle U ,\, e_i e_i^\top\rangle \leq 1 \ , &&\text{for all } i \in [h] , \\
    & U \preceq \eta\, \diag(U) , \\
    & E_s U E_s^\top \preceq \alpha_s \diag(E_s U E_s^\top ) \ , \qquad && \text{for all } s \in [q] , \\
    & U \succeq 0 .
\end{aligned}
\end{equation}
It suffices to show that the optimal value of \eqref{eq:sdp_primal} is at least $\kappa h$, and we do so by considering the dual SDP. 
Let $\gamma_v \in \R$, $\lambda_i \geq 0$, $F \succeq 0$, and $G_s \succeq 0$ be the Lagrangian multipliers for the constraints of \eqref{eq:sdp_primal}. 
Note that $F \in \R^{h\times h}$ and $G_s \in \R^{m_s \times m_s}$.

Then the dual SDP is given by 
\begin{equation}
\label{eq:sdp_dual} \tag{Dual SDP}
\begin{aligned}
& \min \quad \sum_{i \in [h]} \lambda_i \\
& \textrm{s.t.} \sum_{v \in H} \gamma_v vv^\top + \sum_{i \in [h]} \lambda_i e_i e_i^\top + F - \eta\, \diag(F) + \sum_{s \in [q]} (E_s^\top G_s E_s - \alpha_s E_s^\top \diag(G_s) E_s) \succeq I , \\
& \quad F \succeq 0 , \  G_s \succeq 0 \text{ for all $s \in [q]$} , \ \text{and } \lambda_i \geq 0 \text{ for all } i \in [h] .
\end{aligned}
\end{equation}
Here and in \eqref{eq:neg_subspace_SDP} below, the sum over $v$ ranges over a fixed basis of $H$.

Note that strong duality holds here, as \eqref{eq:sdp_dual} is strictly feasible (e.g., \cite[Section 5.9]{BV04book}). For instance, the dual feasible solution $F = G_s = I$ for all $s \in [q]$, $\gamma_v = 0$ for all $v \in H$, and $\lambda_i = \sum_{s \in [q]} 2 \alpha_s \Tr(E_s^\top E_s) + 2\eta$ for all $i \in [h]$ is in the interior of the dual feasible region. 

Our goal is to prove that for an arbitrary feasible solution to \eqref{eq:sdp_dual}, its objective value is at least $\kappa h$.
By strong duality, this would imply that there must be a solution to \eqref{eq:sdp_primal} with objective value at least $\kappa h$.

To do so, we recall the following fact from \cite{BG17}.
\begin{fact}[\cite{BG17}] \label{fact:subspace_diag}
There exists a subspace $W_F \subset \R^h$ with dimension $\dim(W_F) \geq (1 - \eta^{-1}) h$ such that for all vectors $u \in W_F$, one has  $u^\top F u \leq \eta\, u^\top \diag(F) u$ .
\end{fact}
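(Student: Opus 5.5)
The plan is a normalize-and-count-eigenvalues argument: conjugate $F$ by the inverse square root of its diagonal, which makes the diagonal all ones, and then use the fact that the trace bounds how many eigenvalues can be large.

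\textbf{Step 1: remove zero diagonal entries.} Let $D := \diag(F)$, let $S := \{i \in [h] : F(i,i) > 0\}$ with $h' := |S|$, and let $Z := [h]\setminus S$. Since $F \succeq 0$, every $2\times 2$ principal minor is nonnegative, so $F(i,j)^2 \le F(i,i)F(j,j)$. Hence if $F(i,i)=0$, then the whole $i$th row and column of $F$ vanish. So $F$ is supported on the $S\times S$ block, and for any $u$, writing $u_S$ for its restriction to $S$, both $u^\top F u$ and $u^\top D u$ depend only on $u_S$. It therefore suffices to find a subspace $V_S \subset \R^S$ of dimension at least $(1-\eta^{-1})h'$ on which the inequality holds. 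We then take $W_F := V_S \oplus \R^Z$, whose dimension is at least $(1-\eta^{-1})h' + (h - h') \ge (1-\eta^{-1})h$.

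\textbf{Step 2: normalize and count.} On $S$, the block $D_S$ is invertible, so set $M := D_S^{-1/2} F_{S} D_S^{-1/2}$. This matrix is PSD and has unit diagonal, so $\Tr(M) = h'$. Because all eigenvalues of $M$ are nonnegative and sum to $h'$, at most $h'/\eta$ of them (counted with multiplicity) can be strictly larger than $\eta$. Let $V$ be the span of the eigenvectors of $M$ with eigenvalue at most $\eta$. Then $\dim V \ge h' - h'/\eta$, and $w^\top M w \le \eta \|w\|_2^2$ for all $w \in V$.

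\textbf{Step 3: pull back.} Define $V_S := D_S^{-1/2} V$. This has the same dimension as $V$ because $D_S^{-1/2}$ is invertible. For $u = D_S^{-1/2} w$ with $w \in V$,
\[
u^\top F_S u = w^\top M w \le \eta \|w\|_2^2 = \eta\, u^\top D_S u,
\]
which is exactly the required inequality on $S$.

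Combining this with Step 1 proves the fact. The only slightly delicate point is the zero-diagonal coordinates, where $D$ is not invertible; the PSD minor argument in Step 1 handles it, and everything else is routine linear algebra.
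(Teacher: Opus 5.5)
Your proof is correct and is essentially the paper's argument: the paper cites this fact from \cite{BG17} rather than proving it, but its proof of the generalization \Cref{claim:subspace_diag_general} specializes (with $E_s = I$, $r_s = 1$) to exactly your normalize by $\diag(F)^{-1/2}$, trace-count, and pull-back argument, and it handles zero diagonal entries by the same PSD observation (in its footnote). Your explicit choice $W_F = V_S \oplus \R^Z$ is a clean way to make that footnote's restriction step precise.
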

We slightly generalize this fact in the following claim, whose proof is deferred to later.
\begin{claim} \label{claim:subspace_diag_general}
For any $s \in [q]$, there exists a subspace $W_s \subset \R^h$ with dimension $\dim(W_s) \geq (1 - r_s \alpha_s^{-1}) h$ such that for all vectors $u \in W_s$, one has  $u^\top E_s^\top G_s E_s u \leq \alpha_s u^\top E_s^\top \diag(G_s) E_s u$ . 
\end{claim}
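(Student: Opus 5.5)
The plan is to reduce to an eigenvalue-counting argument in $\R^{m_s}$, mirroring the proof of \Cref{fact:subspace_diag}, and then pull the resulting subspace back to $\R^h$ through $E_s$. Fix $s\in[q]$ and write $D:=\diag(G_s)$ and $G:=G_s\succeq 0$.

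The first step handles zero diagonal entries. If $D(i,i)=0$ then, since $G\succeq 0$, the entire $i$th row and column of $G$ vanish. We may therefore restrict attention to the index set $S:=\{i: D(i,i)>0\}$. Outside $S$ both sides of the desired inequality receive no contribution: for $w\in\R^{m_s}$ we have $w^\top G w = w_S^\top G_{SS} w_S$ and $w^\top D w = w_S^\top D_{SS} w_S$.

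The second step is a trace bound. On $S$, form the normalized matrix $K:=D_{SS}^{-1/2}G_{SS}D_{SS}^{-1/2}\succeq 0$. It has unit diagonal, so $\Tr(K)=|S|\le m_s$. Since the eigenvalues of $K$ are nonnegative and sum to at most $m_s$, at most $m_s/\alpha_s$ of them exceed $\alpha_s$. Let $P\subseteq\R^{S}$ be the span of the eigenvectors with eigenvalue greater than $\alpha_s$, so that $\dim P\le m_s/\alpha_s\le r_s\alpha_s^{-1}h$. For any $z\perp P$ we have $z^\top K z\le \alpha_s\|z\|^2$. Substituting $z=D_{SS}^{1/2}w_S$ gives $w^\top G w\le \alpha_s w^\top D w$ whenever $D_{SS}^{1/2}w_S\perp P$. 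This is a linear condition on $w$ of codimension at most $\dim P$.

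The third step pulls this back to $\R^h$. Define
\[
W_s:=\{u\in\R^h:\ D_{SS}^{1/2}(E_su)_S\perp P\}.
\]
This is the kernel of the linear map $u\mapsto \Pi_P D_{SS}^{1/2}(E_su)_S$, whose rank is at most $\dim P$. Hence $\dim W_s\ge h-\dim P\ge (1-r_s\alpha_s^{-1})h$. For $u\in W_s$, applying step two with $w=E_su$ yields $u^\top E_s^\top G_sE_su\le\alpha_s u^\top E_s^\top\diag(G_s)E_su$, which is the claim.

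The only delicate point is the degenerate diagonal in the first step; it is dispatched by the PSD fact that a zero diagonal entry forces its row to vanish. Apart from that, the argument is the standard trace-counting argument, with the dimension loss measured in $\R^{m_s}$ (giving $m_s/\alpha_s\le r_sh/\alpha_s$) rather than in $\R^h$.
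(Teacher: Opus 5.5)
Your proof is correct and takes essentially the same route as the paper. Both normalize $G_s$ by its diagonal (restricting to positive diagonal entries), use the trace bound $\le m_s\le r_s h$ to show that at most $r_s\alpha_s^{-1}h$ eigenvalues exceed $\alpha_s$, and pull the good subspace back through $E_s$. Your dimension count, as the kernel of a map of rank at most $\dim P$, is just a slightly cleaner way of writing the paper's $E_s^{-1}(E_s(\R^h)\cap W'_{s,G})$, and you handle zero diagonal entries explicitly where the paper uses a footnote.
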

Using \Cref{fact:subspace_diag} and \Cref{claim:subspace_diag_general}, we find subspaces $W_F$ of dimension at least $(1 - \eta^{-1})h$ and $W_s$ for all $s \in [q]$ of dimensions at least $(1 - r_s \alpha_s^{-1})h$ each with the corresponding guarantees.
This means for any vector  $u \in W_{\mathsf{neg}} := H^\perp \cap W_F \cap \big(\bigcap_{s =1}^q W_s \big)$, we have
\begin{align} \label{eq:neg_subspace_SDP}
u^\top \Big( \sum_{v \in H} \gamma_v vv^\top + F - \eta\, \diag(F) + \sum_{s \in [q]} \big(E_s^\top G_s E_s - \alpha_s E_s^\top \diag(G_s) E_s \big)  \Big) u \leq 0 .
\end{align}
Note that
\[
\dim(W_{\mathsf{neg}}) \geq \big (1 - \delta - \eta^{-1} - \sum_{s \in [q]} r_s \alpha_s^{-1} \big) h \geq \kappa h ,
\]
and hence there exists a set of $\lceil\kappa h\rceil$ orthonormal vectors $v_1, \cdots, v_{\lceil\kappa h\rceil} \in W_{\mathsf{neg}}$. Then we have
\begin{align*}
\sum_{i \in [h]} \lambda_i  \;\geq\; \sum_{j \in [\lceil\kappa h\rceil]} \Big\langle \sum_{i \in [h]} \lambda_i e_i e_i^\top ,\, v_j v_j^\top \Big\rangle \;\geq\; \sum_{j \in [\lceil\kappa h\rceil]} \big\langle I ,\, v_j v_j^\top \big\rangle \;=\; \lceil\kappa h\rceil \;\geq\; \kappa h ,
\end{align*}
where the first inequality uses that $\sum_{j \in [\lceil\kappa h\rceil]} v_j v_j^\top \preceq I$, the second inequality follows from the constraint of \eqref{eq:sdp_dual} and the inequality \eqref{eq:neg_subspace_SDP}. 
This shows that \eqref{eq:sdp_dual} has value at least $\kappa h$, which proves the theorem. 
\end{proof}

We are now left to prove  \Cref{claim:subspace_diag_general}. 

\begin{proof}[Proof of \Cref{claim:subspace_diag_general}]
We assume without loss of generality that all diagonal entries of $G_s$ are strictly positive.\footnote{Otherwise, restrict to the coordinates where the diagonal of $G_s$ is positive: since $G_s \succeq 0$, every row of $G_s$ with a zero diagonal entry is identically zero, and such coordinates contribute to neither side of the claimed inequality.} Define $\widetilde{G}_s := \diag(G_s)^{- 1/2} G_s \diag(G_s)^{- 1/2}$. 
Then since $\widetilde{G}_s$ has unit diagonal and at most $r_s h$ rows, we have
\[
\Tr(\widetilde{G}_s) \leq r_s h .
\]
Let  $W_{s,G} \subset \R^{m_s}$ be the span of all eigenvectors of $\widetilde{G}_s$ with eigenvalues at most $\alpha_s$. 
Then the above bound implies that $\dim(W_{s,G}) \geq m_s - r_s \alpha_s^{-1} h$, as there can be  at most $r_s \alpha_s^{-1} h$ eigenvalues exceeding $\alpha_s$. 
Define $W'_{s,G} := \diag(G_s)^{-1/2} W_{s,G}$, which also has $\dim(W'_{s,G}) \geq m_s - r_s \alpha_s^{-1} h$. 
Then for any vector $u \in W'_{s,G}$, we have
\begin{align*}
u^\top G_s u & = (\diag(G_s)^{1/2} u)^\top \widetilde{G}_s (\diag(G_s)^{1/2} u )\\
& \leq \alpha_s \cdot (\diag(G_s)^{1/2} u)^\top \diag(G_s)^{1/2} u = \alpha_s u^\top \diag(G_s) u ,
\end{align*}
where the inequality above follows as $\diag(G_s)^{1/2} u \in W_{s,G}$ and all eigenvalues of $\widetilde{G}_s$ in the subspace  $W_{s,G}$ are at most $\alpha_s$. 

We view $E_s: \mathbb{R}^h \rightarrow \mathbb{R}^{m_s}$
as a linear map with kernel $\mathsf{Ker}(E_s)$. 
Note that any vector $u \in \mathsf{Ker}(E_s)$ clearly satisfies the statement of the claim, since both sides of the inequality are $0$. 
Any vector $u \in \mathbb{R}^h$ such that $E_s u \in W'_{s,G}$ also satisfies the claim statement. 
Therefore, the subspace 
\[ W_s := E_s^{-1}(E_s(\R^h) \cap W_{s,G}') \]
satisfies $\dim(W_s) \geq (1 - r_s \alpha_s^{-1}) h$ and the statement of the claim. 
\end{proof}

\section{Proof of the Improved Koml\'os Bound}
\label{sec:komlos_proof}

In this section we prove \Cref{thm:main}, restated below. 

\maintheorem*

We will assume that $n \geq 100$, as otherwise the result holds trivially. 
Also, it suffices to bound only the one-sided discrepancy $\max_{i \in [m]} \langle a_i,x \rangle = O((\log n)^{1/4} (\log \log n)^{7/4})$, as we can stack the matrix $-A$ below the input matrix $A$ (and rescale by $1/\sqrt{2}$ so that the column $\ell_2$ norm is at most $1$). We may also assume that $m\le n^2$: any row with $\|a_i\|_2\le n^{-1/2}$ satisfies $|\langle a_i,x\rangle|\le \|a_i\|_2\|x\|_2\le 1$ for every $x\in\{-1,1\}^n$ and may be discarded, and since the columns of $A$ have $\ell_2$ norm at most one, fewer than $n^{2}$ rows have $\ell_2$ norm exceeding $n^{-1/2}$.

The proof will be based on constructing an It\^o process $(x_t)_{0 \leq t \leq T} \subset [-1,1]^n$ with $x_0 = 0$ and increment $d x_t = \Sigma_t d W_t$, for a suitably chosen diffusion matrix $\Sigma_t \in \R^{n \times n}$ to ensure that 
$x_T\in \{-1,1\}^n$ and it satisfies the discrepancy bound in \Cref{thm:main} with high probability.

The process $x_t$ will also satisfy the {\em sticky} property that $dx_t(j) = 0$ whenever $x_t(j) \in \{\pm 1\}$, so that each coordinate remains unchanged once it reaches $\pm 1$. We denote by $V_t := \{j \in [n]: |x_t(j)| < 1\}$ the set of {\em alive} coordinates at time $t$, and coordinates in $[n] \setminus V_t$ will be called {\em dead} or {\em frozen}. 
We call $x_t$ the {\em fractional} coloring at time $t$. 

\subsection{Overview}

Before describing the process formally, we first sketch the main ideas.
For simplicity, consider the case where each entry of $A$ has magnitude $0$ or $1/\sqrt{k}$, with at most $k$ non-zero entries per column. We will show a discrepancy bound of $b \approx (\log \log n)^{1/2}$ whenever $k \geq \log^3 n$, which is substantially stronger than the bound of \Cref{thm:main}.

The basic idea is to track the discrepancy of each row over time as the coloring $x_t$ evolves, and prevent it from increasing beyond our target $O(b)$.
A key tool, which goes back to \cite{BDG19}, is the following:
At any time $t$, given any (say) $|V_t|/2$ bad directions, there exists an increment $dx_t$ that is orthogonal to all these directions, and yet behaves like a random direction (more precisely, it is $O(1)$-spectrally independent). Let us see how we can use it.

Fix some time $t$. Call a row $a_i$ {\em large} if its support is at least $10k$ on $V_t$, and {\em small} if it is at most $k/\log{n}$. Otherwise, call it {\em medium}. As each column in $V_t$ has support $\leq k$, there can be at most $|V_t|/10$ large rows, and at most $|V_t|\log n$ medium rows at any time $t$.
By the tool above, we can easily control large and small rows --- at each $t$, simply choose $dx_t$ to be orthogonal to all the (at most $|V_t|/10$) large rows. This ensures that each row incurs zero discrepancy while it is large. Once a row is small, by \Cref{prop:spec-indep-subg-disc}, the random-like property of $dx_t$ ensures that it incurs only $O(1)$ discrepancy with high probability.

So it suffices to focus on medium rows. 
Call a row $a_i$ {\em bad} if its discrepancy ever reaches $b$.
By the random-like property of $dx_t$, a typical row $a_i$ becomes bad with probability  $\exp(-\Theta(b^2))$ which is $(\log n)^{-O(1)}$  by our choice of $b$. So, unfortunately, once $|V_t| \ll m (\log n)^{-O(1)}$, one cannot hope to choose $dx_t$ orthogonal to all bad rows.

Interestingly, one can still ensure that with high probability, at most $|V_t|/10$ rows can be {\em both} bad and medium at all times $t$, irrespective of the subset $V_t$ of alive coordinates. This would give the result, as now
we can also choose $dx_t$ to be orthogonal to all such rows, at each time $t$, thereby ensuring that the discrepancy incurred by medium rows is at most $b$.
To this end, we will crucially use the affine spectral independence property, which ensures that the row discrepancies evolve almost independently. We describe this next.

\smallskip
{\bf Approximate Independence.} 
Suppose first that the row discrepancies evolve (completely) independently. 
Fix some column $j \in [n]$, and consider the at most $k$ rows in its support, i.e.,~$a_i$ with $a_i(j)\neq 0$. As the row discrepancies evolve independently, by Chernoff bounds, at most $k\exp(-\Omega(b^2)) +O(\log n) \leq k/(10\log n)$ of these rows will become bad, with high probability. By a union bound, with high probability this holds simultaneously for every column $j \in [n]$; condition on this event. Then at any time $t$, as each medium row has support $\geq k/\log n$ in $V_t$, a simple averaging argument gives that there can be at most $|V_t|/10$ bad medium rows.

While full independence is too much to expect, one can still ensure that the discrepancy increments for medium rows are almost independent with parameter $\alpha=O(\log n)$. 
This follows from \Cref{thm:sub-isotropic-SDP_Multi-Class}, using the fact that there are only $O(|V_t|\log n)$ medium rows at any time $t$.  
This only increases the additive term to $O(\alpha \log n)$ in the bound above on bad rows in a column, and the same argument works provided $k\geq \log^3n$.

\smallskip
{\bf The General Case.}
The argument for the general Koml\'{o}s problem is similar, except that one needs to handle multiple ``scales" of $k$, and the discrepancy bound in the argument above degrades as $k \ll \log^3 n$.
In particular, we decompose $A$ into $O(\log \log n)$ geometrically decreasing ``scales'' of $k$ below a fixed power of $\log n$, together with a single scale collecting all larger $k$, and handle each scale separately. 
These different scales make the argument somewhat more technical, as the parameters must be set differently for each scale.

\subsection{Multi-Scale Decomposition}

We start with a multi-scale decomposition of the input matrix $A$.

Set $P:= 1+\lceil 5\log_2\log n\rceil$.
For each row $a_i$ and each scale $p\in[P-1]$, define $a_i^{(p)}\in\mathbb R^n$ by
\[
a_i^{(p)}(j):=
\begin{cases}
a_i(j), & \text{if } |a_i(j)|\in(2^{-p},2^{-p+1}],\\
0, & \text{otherwise.}
\end{cases}
\]
The final scale $P$ will behave differently, and we define
\[
a_i^{(P)}(j):=
\begin{cases}
a_i(j), & \text{if } |a_i(j)|\le 2^{-(P-1)},\\
0, & \text{otherwise.}
\end{cases}
\]
Note that $2^{-(P-1)}\le \log^{-5}n$, so every non-zero entry of $A^{(P)}$ has magnitude at most $\log^{-5}n$.
Let $A^{(p)} \in \R^{m \times n}$ denote the matrix with rows $a_i^{(p)}$. Then we have the decomposition $A=\sum_{p=1}^P A^{(p)}$.
Note that for each $p \in [P-1]$, each non-zero entry of $A^{(p)}$ has magnitude in $(2^{-p},2^{-p+1}]$, and thus each column of $A^{(p)}$ has at most $2^{2p}$ non-zero entries.

\smallskip
\noindent\textbf{Target Discrepancy.}
We fix a common target discrepancy bound for all scales \[b := C (\log n)^{1/4} (\log \log n)^{3/4}.\]
We will show that with high probability, $\|A^{(p)} x_T \|_\infty \leq O(b)$ for each $p$. This will imply \Cref{thm:main}, as
$\|A x_T\|_\infty \leq \sum_{p \in [P]}  \|A^{(p)} x_T\|_\infty = O(bP) = O\big((\log n)^{1/4}(\log\log n)^{7/4}\big)$.

\smallskip
\noindent \textbf{Row Classification.} 
For each scale $p \in [P]$, define the threshold
\begin{align} \label{eq:size-threshold_Komlos}
\mu_p:=\max\!\Big\{\frac{4 b}{2^p},\frac{b^2}{\log n}\Big\}. 
\end{align}

At each time $0 \leq t \leq T$, recall that $V_t \subseteq [n]$ is the set of alive columns. 
We classify the rows based on squared $\ell_2$ mass  in $V_t$ as follows. 

\begin{definition}[Large, medium, and small rows]
At each time $0 \leq t \leq T$, a row $i \in [m]$ is called

(i) \emph{scale-$p$ large} if $\sum_{j\in V_t}a_i^{(p)}(j)^2>10$;

(ii) \emph{scale-$p$ small} if $\sum_{j\in V_t}a_i^{(p)}(j)^2\le \min(\mu_p, 10)$, and

(iii) \emph{scale-$p$ medium} if $\sum_{j\in V_t}a_i^{(p)}(j)^2\in (\mu_p, 10]$. 
\end{definition}

Notice that the choice of $\mu_p$ in \eqref{eq:size-threshold_Komlos} ensures that every scale-$p$ small row $a_i$ either has $\ell_1$-norm $O(b)$ or has $\ell_2$-norm $O(b/\sqrt{\log n})$, restricted to the coordinates in $V_t$.

Let $\mathcal{L}^{(p)}_t, \mathcal{M}^{(p)}_t \subseteq [m]$ denote the set of scale-$p$ large and medium rows at time $t$.
As each column has $\ell_2$ norm at most $1$, the total squared $\ell_2$ mass $\sum_{j \in V_t} \sum_{i \in [m]} a_i(j)^2$ in the columns $V_t$ is at most $|V_t|$. Thus
an averaging argument over rows gives that
\begin{align}
\label{claim:M_t_bound-Komlos}
 |\mathcal{M}_t^{(p)}| \leq |V_t|/\mu_p \text{ for each $p$, and that } \sum_p  |\mathcal{L}^{(p)}_t| \leq |V_t|/10.
\end{align}

{\bf Medium rows.}
As discussed previously, bounding the discrepancy of rows when they are large or small will be easy, and our focus will thus be on controlling the discrepancies of medium rows.
To control the discrepancy of a scale-$p$ medium row $i$, we will control its regularized discrepancy 
\begin{align} \label{eq:reg-disc-med-Komlos}
Y_t^{(p)}(i)
:=
\bigl\langle a_i^{(p)},x_t\bigr\rangle
+
\beta\sum_{j=1}^n \bigl(a_i^{(p)}(j)\bigr)^2\bigl(1-x_t(j)^2\bigr) ,
\end{align}
where $\beta := b/10$. 

As $1-x_t(j)^2=0$ for dead $j\in [n]\setminus V_t$, and $\sum_{j\in V_t} a_i^{(p)}(j)^2 \leq 10$ for medium rows,  one always has 
\begin{align} \label{eq:reg-disc-approx-Komlos}
\bigl\langle a_i^{(p)},x_t\bigr\rangle \leq Y_t^{(p)}(i) \leq \bigl\langle a_i^{(p)},x_t\bigr\rangle + b .
\end{align}
We call a scale-$p$ medium row $i$ \emph{dangerous} at time $t$ if
\[Y_t^{(p)}(i) \geq 2b.\]
Note that we control the regularized discrepancy rather than $\langle a_i^{(p)},x_t\rangle$ itself: it is $Y_t^{(p)}$ that has the negative-drift structure of \Cref{lem:reg-disc-increment}, and by \eqref{eq:reg-disc-approx-Komlos} a bound on $Y_t^{(p)}(i)$ implies the same bound on the discrepancy.
Finally, define the matrix $E_t^{(p)} \in \R^{\mathcal{M}_t^{(p)} \times n}$, corresponding to \eqref{eq:E_t-reg-disc}, with rows 
\[
E_{t}^{(p)} (i,\cdot) := a_i^{(p)}-2\beta\,(a_i^{(p)\odot 2}\odot x_t) .
\]

\subsection{The It\^o Process} \label{subsec:ito_Komlos} 
We now formally define the It\^o process $(x_t)_{0 \leq t \leq T} \subset [-1,1]^n$. 
The process will start at
 $x_0 = 0$ and evolve with increments $d x_t = \Sigma_t d W_t$, based on a diffusion matrix $\Sigma_t \in \R^{n \times n}$. 
To describe $\Sigma_t$, we first define a subspace $H_t$ of vectors to which $dx_t$ will be orthogonal.

\begin{definition}[Blocking subspace] \label{def:blocking_Komlos}
For each time $0 \leq t \leq T$, let the blocking subspace $H_t \subset \R^{n}$ be the linear span of the following collection of vectors:

 \,\,\,   (i) $e_j$ for every dead column $j \in [n] \setminus V_t$, where $e_j$ is the $j$th standard basis vector;
    
   \,\,\,   (ii) $a_i^{(p)}$ for every scale-$p$ large row $i$; 

   \,\,\,   (iii) $E_t^{(p)}(i, \cdot)$ for every dangerous  scale-$p$ row $i$;

   \,\,\,   (iv) The vector $x_t$. 
\end{definition}

To avoid cumbersome conditions 
in the description of $\Sigma_t$, we will assume that $|V_t|\geq 10$ henceforth. Once $|V_t|$ reaches $10$, we stop the process and simply round each alive variable to $\pm 1$, which increases the discrepancy by at most $10$.

\begin{definition}[Diffusion matrix $\Sigma_t$]  \label{def:Sigma_t-Komlos}
 Let $U_t \in \R^{n \times n}$ be a PSD matrix satisfying:

\,\,\,  (i) $\langle vv^\top, U_t \rangle = 0$ for all $v \in H_t$, where the blocking subspace $H_t$ is given by \Cref{def:blocking_Komlos}, 

\,\,\,   (ii) $U_t(j,j) \leq 1$ for all $j \in V_t$, 

\,\,\,  (iii) $\Tr(U_t) \geq |V_t|/6$, 

\,\,\,  (iv) $U_t \preceq 6 \, \diag(U_t)$, and 

\,\,\,  (v) $E_t^{(p)} U_t (E_t^{(p)})^\top \preceq (6P/\mu_p) \, \diag\big(E_t^{(p)} U_t (E_t^{(p)})^\top \big)$ for every $p \in [P]$. 

Set $\Sigma_t := U_t^{1/2}/\sqrt{\Tr(U_t)}$.
\end{definition}
Constraint (i) ensures that the increment $dx_t$ is orthogonal to $H_t$. In particular, $dx_t(j)=0$ for all dead columns $j \in [n]\setminus V_t$, so the process is sticky as promised. Constraints (iv) and (v) ensure the spectral independence and affine spectral independence properties as in \Cref{prop:weak_ind_reg-disc}. Constraints (ii) and (iii) rule out the all-zero solution for $U_t$.

If $\Sigma_t$ does not exist at any time $t$, we abort the process and declare failure. Provided that $\Sigma_t$ exists for all $t$, $\Sigma_t$ is clearly progressively measurable, and hence $x_t$ is a It\^o process.

The following shows that $\Sigma_t$ always exists if the dimension of $H_t$ is not too large. 

\begin{lemma}[Existence of $\Sigma_t$] \label{lem:existence-Sigma_t}
If the blocking subspace $H_t \subset \R^n$ in \Cref{def:blocking_Komlos} satisfies $\dim(H_t) \leq n - 2|V_t|/3$, then $\Sigma_t \in \R^{n\times n}$ as in \Cref{def:Sigma_t-Komlos} exists. 
In particular, $\Sigma_t$ exists if the total number of dangerous rows (over all scales) is at most $|V_t|/10$. 
\end{lemma}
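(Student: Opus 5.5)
The plan is to apply \Cref{thm:sub-isotropic-SDP_Multi-Class} on the alive coordinates only, with $h := |V_t|$, and then extend the resulting matrix by zeros to an $n\times n$ matrix $U_t$.

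\textbf{Setting up the reduced instance.} Every generator of $H_t$ other than the dead basis vectors $e_j$ decomposes into a part supported on $[n]\setminus V_t$ and a part supported on $V_t$. The first part lies in the span of the $e_j$ with $j\notin V_t$. So $H_t = \mathrm{span}\{e_j : j \notin V_t\} \oplus H'$, where $H' \subset \R^{V_t}$ is the span of the restrictions to $V_t$ of the generators of types (ii)--(iv). Hence $\dim(H') = \dim(H_t) - (n-h) \le h/3$, and we take $\delta = 1/3$. For each scale $p \in [P]$ let $E_p$ be $E_t^{(p)}$ with its columns restricted to $V_t$. By \eqref{claim:M_t_bound-Komlos} it has $m_p = |\mathcal M_t^{(p)}| \le h/\mu_p$ rows, so we take $r_p = 1/\mu_p$. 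We choose $\eta = 6$, $\kappa = 1/6$ and $\alpha_p = 6P/\mu_p$. Then $r_p\alpha_p^{-1} = 1/(6P)$, and
\[
\eta^{-1} + \kappa + \sum_{p=1}^P r_p\alpha_p^{-1} = \tfrac16+\tfrac16+\tfrac16 = \tfrac12 \le 1-\delta = \tfrac23 .
\]
\Cref{thm:sub-isotropic-SDP_Multi-Class} therefore yields a PSD matrix $U' \in \R^{V_t\times V_t}$ satisfying its conditions (i)--(v).

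\textbf{Lifting to $U_t$.} Let $U_t$ agree with $U'$ on $V_t \times V_t$ and be zero elsewhere; $U_t$ is PSD. We check the conditions of \Cref{def:Sigma_t-Komlos} in turn.
\begin{itemize}
\item[(i)] For $v \in H_t$, write $v = v_{\mathrm{dead}} + v_V$ with $v_V \in H'$. Since $U_t$ vanishes off $V_t$, we get $\langle vv^\top, U_t\rangle = \langle v_V v_V^\top, U'\rangle = 0$.
\item[(ii)--(iv)] These transfer directly, using $\Tr(U_t) = \Tr(U') \ge h/6$. Zero-padding preserves $U \preceq 6\,\diag(U)$.
\item[(v)] Since $U_t$ is supported on $V_t\times V_t$, we have $E_t^{(p)} U_t (E_t^{(p)})^\top = E_p U' E_p^\top$, so condition (v) is inherited.
\end{itemize}
Finally $\Tr(U_t) > 0$, so $\Sigma_t = U_t^{1/2}/\sqrt{\Tr(U_t)}$ is well defined.

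\textbf{The ``in particular'' statement.} We count generators of $H_t$:
\begin{itemize}
\item $n-h$ dead basis vectors;
\item at most $\sum_p |\mathcal L_t^{(p)}| \le h/10$ large-row vectors, by \eqref{claim:M_t_bound-Komlos};
\item at most $h/10$ dangerous-row vectors by hypothesis, counting a row once for each scale in which it is dangerous;
\item one vector $x_t$.
\end{itemize}
This gives $\dim(H_t) \le n - h + h/5 + 1$. We need $h/5 + 1 \le h/3$, which holds because $h \ge 10$, so $\dim(H_t) \le n - 2h/3$ and the first part applies.

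The only delicate point is the reduction to the alive coordinates. It requires verifying that the subspace $H'$ has the right dimension and that restricting the columns of $E_t^{(p)}$ does not change $E_t^{(p)}U_t(E_t^{(p)})^\top$. Both follow from $U_t$ vanishing on the dead coordinates.
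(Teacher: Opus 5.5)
Your proposal is correct and follows the same route as the paper. You reduce to the alive coordinates via $H'=H_t\cap\R^{V_t}$, which has dimension at most $|V_t|/3$. You then apply \Cref{thm:sub-isotropic-SDP_Multi-Class} with $h=|V_t|$, $\delta=1/3$, $r_p=1/\mu_p$, $\kappa=1/6$, $\eta=6$, $\alpha_p=6P/\mu_p$, and you prove the second part by counting generators of $H_t$. Your explicit zero-padding check of conditions (i)--(v) and your handling of the extra $+1$ from $x_t$ (needing $h/5+1\le h/3$, true since $h\ge 10$) are slightly more careful than the paper, whose displayed bound $\dim(H_t)\le n-0.9|V_t|+|\mathcal{D}_t|$ quietly drops that $+1$.
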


\begin{proof}
As $e_j \in H_t$ for every dead column $j \in [n] \setminus V_t$, condition (i) in \Cref{def:Sigma_t-Komlos} forces that $U_t(j,j) = 0$ for all $j \in [n] \setminus V_t$. Thus, as $U_t$ is PSD, it can only have non-zero entries in $V_t \times V_t$. 

As $\R^{[n]\setminus V_t} \subseteq H_t$, the subspace $H_t \cap \R^{V_t}$ has dimension at most $ \dim(H_t) - (n-|V_t|) \leq |V_t|/3$. 
By \eqref{claim:M_t_bound-Komlos}, the number of rows of each $E_t^{(p)}$ is at most $|V_t|/\mu_p$.
Therefore, we can apply \Cref{thm:sub-isotropic-SDP_Multi-Class} to the subspace $H = H_t \cap \R^{V_t}$ with $h= |V_t|$ and parameters $\delta = 1/3$ and $r_p \leq 1/\mu_p$, and $\kappa = 1/6$, $\eta = 6$, $\alpha_p = (6P/\mu_p)$ (corresponding to constraints (iii)-(v) in \Cref{def:Sigma_t-Komlos}).
The existence of $U_t$, and hence $\Sigma_t$, then follows as 
\[
\eta^{-1} + \kappa + \sum_{p \in [P]} r_p \alpha_p^{-1} \leq 
\frac{1}{2} \leq 1 - \delta.
\]
For the second part, let $\mathcal{L}_t$ denote the set of large rows (over all scales), and $\mathcal{D}_t$ the set of dangerous rows at time $t$.  As $|\mathcal{L}_t| \leq |V_t|/10$ by \eqref{claim:M_t_bound-Komlos} and we have assumed that $|V_t| \geq 10$, 
 \[\dim(H_t) \leq (n-|V_t|) + |\mathcal{L}_t| + |\mathcal{D}_t| + 1 \leq n - 0.9|V_t| + |\mathcal{D}_t|, \]
 and thus  $\dim(H_t) \le n-2|V_t|/3$ provided that $|\mathcal{D}_t| \leq |V_t|/10$. 
\end{proof}

\subsection{Analysis}
We now prove \Cref{thm:main}. 
We first show that if the process does not abort, i.e.,~$\Sigma_t$ exists at all times $t$, then the discrepancy bound in \Cref{thm:main} holds with high probability. 
Next, we show that with high probability, the process does not abort.

For each row $i\in[m]$ and scale $p\in[P]$, define the stopping times
\begin{align} \label{eq:stopping-times-Komlos}
\tau_{i,\mathsf{med}}^{(p)} := \inf\{t\ge 0: \text{row $i$ not scale-$p$ large}\},\quad
\tau_{i,\mathsf{small}}^{(p)} := \inf\{t\ge 0:\ \text{row $i$ scale-$p$ small}\},
\end{align}
with the convention $\inf\emptyset := T$. Since $V_t$ shrinks over time, $\sum_{j\in V_t}a_i^{(p)}(j)^2$ is non-increasing, 
so row $i$ is scale-$p$ large precisely on $[0,\tau_{i,\mathsf{med}}^{(p)})$, medium on $[\tau_{i,\mathsf{med}}^{(p)},\tau_{i,\mathsf{small}}^{(p)})$, and small on $[\tau_{i,\mathsf{small}}^{(p)},T]$.

\begin{lemma}[Discrepancy bound] \label{lem:disc-given-Sigma}
    Suppose $\Sigma_t$ exists for all $0\le t\le T$. Then $T\le n$ almost surely, and with probability at least $1-1/n$, the output coloring $x\in\{-1,1\}^n$ satisfies $\|Ax\|_\infty = O(bP)$.
\end{lemma}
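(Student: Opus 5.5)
We prove the two claims separately: first that $T\le n$, via a trace/energy argument, and then the discrepancy bound, by splitting each row's trajectory at the stopping times in \eqref{eq:stopping-times-Komlos}.

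\textbf{Bounding $T$.} Consider $\|x_t\|_2^2$. By It\^o's formula,
\[
d\|x_t\|_2^2 = 2\langle x_t, dx_t\rangle + \Tr(\Sigma_t\Sigma_t^\top)\,dt .
\]
The first term vanishes, since $x_t\in H_t$ by \Cref{def:blocking_Komlos}(iv) and $U_t$ annihilates $H_t$. For the second term, $\Tr(\Sigma_t\Sigma_t^\top)=\Tr(U_t)/\Tr(U_t)=1$. Hence $\|x_t\|_2^2=t$ deterministically. Since $x_t\in[-1,1]^n$, this forces $t\le n$, so $T\le n$ almost surely.

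\textbf{Final rounding.} The process stops once $|V_t|$ drops to $10$, and rounding the remaining alive coordinates changes each $\langle a_i,x\rangle$ by at most $2\cdot 10$. This is $O(1)$ because entries of $A$ are bounded by $1$.

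\textbf{Discrepancy for a fixed row and scale.} It suffices to show $\langle a^{(p)}_i,x_T\rangle=O(b)$ for every $i$ and $p$, with failure probability at most $1/(n^3P)$ per pair; a union bound over $m\le n^2$ rows and $P$ scales then gives probability $1-1/n$. Fix $i$ and $p$, and split the increment into three phases.

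\emph{Large phase} $[0,\tau_{\mathsf{med}})$. Here $a_i^{(p)}\in H_t$, so $d\langle a_i^{(p)},x_t\rangle=0$. The discrepancy therefore stays at $0$ (recall $x_0=0$).

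\emph{Medium phase} $[\tau_{\mathsf{med}},\tau_{\mathsf{small}})$. The claim is that $Y^{(p)}_t(i)$ never exceeds roughly $2b+o(b)$. At $\tau_{\mathsf{med}}$ the discrepancy is $0$, so $Y\le b$ by \eqref{eq:reg-disc-approx-Komlos}. Whenever $Y_t\ge 2b$ the row is dangerous, so $E_t^{(p)}(i,\cdot)\in H_t$. Then, by \Cref{lem:reg-disc-increment}, the martingale part of $dY_t(i)$ vanishes and only the nonpositive drift $-\beta\langle a_i,\diag(U_t)a_i\rangle/\Tr(U_t)$ remains. By continuity $Y$ cannot cross above $2b$, so $Y_t\le 2b$ throughout the phase, and therefore $\langle a_i^{(p)},x_t\rangle\le 2b$. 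A small technical point is that the dangerous set is defined by a closed condition; we handle it by noting that at the boundary the increment is purely nonpositive drift, or by using the threshold $2b$ with a slack. Note that this step requires no probability at all. The decoupling bound is only needed for non-abortion, which is the next lemma; here existence of $\Sigma_t$ is assumed.

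\emph{Small phase} $[\tau_{\mathsf{small}},T]$. Apply \Cref{prop:spec-indep-subg-disc} to the restriction of $a_i^{(p)}$ to $V_{\tau_{\mathsf{small}}}$, started at the stopping time $\tau_{\mathsf{small}}$. This is legitimate by the strong Markov property, or equivalently by applying the proposition to the process with $U_t$ zeroed out before $\tau_{\mathsf{small}}$. Coordinates outside $V_{\tau_{\mathsf{small}}}$ are frozen and contribute nothing further. Using $U_t\preceq 6\diag(U_t)$ (so $\eta=6$) and $\gamma=n^{-3}/P$, the increase is at most $C_0\|a\|_2\sqrt{6\log(n^3P)}$ with probability $1-\gamma$. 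There are two cases according to which term achieves $\mu_p$.
\begin{itemize}
\item If $\mu_p=b^2/\log n$, then $\|a\|_2\le b/\sqrt{\log n}$, and the increase is $O(b)$.
\item Otherwise $\mu_p=4b/2^p$ with $p<P$. This case needs a different argument: every entry has magnitude greater than $2^{-p}$, so the $\ell_1$ norm on $V_t$ is at most $2^p\sum_j a_i^{(p)}(j)^2\le 4b$. The increase is therefore at most $2\|a\|_1\le 8b$, deterministically.
\end{itemize}
The case $p=P$ always uses the first bullet, provided $\mu_P=b^2/\log n$. This holds since $2^P\ge\log^5 n$ makes $4b/2^P$ tiny. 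If $\mu_p\ge 10$ the row is never medium, but the same bounds apply.

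\textbf{Conclusion.} Summing the three phases gives $\langle a_i^{(p)},x_T\rangle\le O(b)$ for each $p$ with the stated probability. Summing over $p\in[P]$ and adding the $O(1)$ rounding error gives $\|Ax\|_\infty=O(bP)$, where we use the one-sided reduction via stacking $-A$. The main subtlety I expect is rigorously justifying the barrier argument in the medium phase and the restart at stopping times; everything else is bookkeeping.
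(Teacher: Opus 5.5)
Your proposal is correct and follows essentially the same route as the paper. You use the identity $\|x_t\|_2^2=t$ to get $T\le n$, then split each row and scale at $\tau_{i,\mathsf{med}}^{(p)}$ and $\tau_{i,\mathsf{small}}^{(p)}$: zero discrepancy in the large phase, the barrier at $2b$ for $Y_t^{(p)}(i)$ in the medium phase, and in the small phase either the $\ell_1$ bound or \Cref{prop:spec-indep-subg-disc} depending on which term attains $\mu_p$, followed by a union bound. The differences are only cosmetic: you take per-pair failure probability $n^{-3}/P$ where the paper takes $n^{-4}$, and you explicitly account for the final rounding and the restart at $\tau_{i,\mathsf{small}}^{(p)}$.
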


\begin{proof}

We first show that the process reaches $|V_t|\le 10$ by time $n$, i.e., $T\le n$ almost surely.
Indeed, applying It\^o's formula to $\|x_t\|_2^2$, we have
\[
d\|x_t\|_2^2
=
2\langle x_t,dx_t\rangle + \Tr(d[x, x]_t)
=
\Tr(\Sigma_t \Sigma_t^\top)\,dt = dt,
\]
as $\langle x_t,dx_t\rangle = 0$ since $x_t \in H_t$ and $\Tr(\Sigma_t \Sigma_t^\top) =1$ by \Cref{def:Sigma_t-Komlos}. 

Thus $\|x_t\|_2^2=t$ for all $0\le t\le T$.  
As $x_t \in [-1,1]^n$ and hence $\|x_t\|_2^2 \leq n$, it follows that $T\le n$.  

\smallskip
We now bound the discrepancy.
Fix scale \(p \in [P]\) and row \(i \in [m]\). We show that $\sup_{0 \leq t \leq T} \langle a_i^{(p)},x_t\rangle = O(b)$ with
probability $1-1/n^{4}$. 
A union bound over all $p,i$ then gives
$\|Ax_T\|_\infty \leq \sum_{p \in [P]}  \|A^{(p)} x_T\|_\infty = O(bP)$ with probability at least $1 - mP/n^{4} \ge 1-1/n$, using $m\le n^2$.

To this end,
let us track the discrepancy incurred by $a_i^{(p)}$ over time.
Recall the stopping times \eqref{eq:stopping-times-Komlos}.
While row $i$ is scale-$p$ large, i.e.,~$t < \tau_{i,\mathsf{med}}^{(p)}$, we have $a_i^{(p)} \in H_t$ and thus it incurs no discrepancy.

While row $i$ is scale-$p$ medium, 
we claim that
its regularized discrepancy $Y_t^{(p)}(i)$  never exceeds $2b$. 
Indeed,  $Y_{t}^{(p)}(i) \leq b$ initially at $t=\tau_{i,\mathsf{med}}^{(p)}$ (as $\beta = b/10$). Later, if $Y_t^{(p)}(i)$ reaches $2b$, the row is dangerous and the vector $E_t^{(p)}(i, \cdot) \in H_t$, which ensures that $d Y_t^{(p)}(i) \leq  0$ (deterministically) by \Cref{lem:reg-disc-increment}.  
As $\langle a_i^{(p)},x_t\rangle \le Y_t^{(p)}(i)$ for all $t$, this implies that its discrepancy never exceeds $2b$. 

When row $i$ becomes scale-$p$ small, we consider two cases depending on $p$: whether $\mu_p = 4b/2^p$ or $b^2/\log n$ in \eqref{eq:size-threshold_Komlos}.
The case $\mu_p = 4b/2^p$ holds for $p \leq \log ((4 \log n)/b) < P$; for such $p$ (in particular $p\le P-1$), all non-zero entries of $a_i^{(p)}$ have absolute values in  $(2^{-p}, 2^{-p+1}]$. 
So \[\sum_{j \in V_t} |a_i^{(p)}(j)| \leq 2^p \sum_{j \in V_t}  a_i^{(p)}(j)^2 \leq 2^p \mu_p = O(b),\] and
thus row $i$ can incur at most $O(b)$ discrepancy.

When $\mu_p = b^2/\log n$, we use the spectral independence constraint (iv) of \Cref{def:Sigma_t-Komlos}. Applying \Cref{prop:spec-indep-subg-disc} on the interval $[\tau_{i,\mathsf{small}}^{(p)}, T]$ to the vector $a_i^{(p)}$ restricted to $V_{\tau_{i,\mathsf{small}}^{(p)}}$ (which has squared $\ell_2$ norm at most $\mu_p$), with $\eta = 6$ and $\gamma = n^{-4}$, we get that with probability at least $1 - n^{-4}$, the discrepancy incurred during this interval is $O((\mu_p \log n)^{1/2}) = O(b)$.
\end{proof}

We now show that with high probability, $\Sigma_t$ exists for all times $t$. To this end, by \Cref{lem:existence-Sigma_t} it suffices to show the following.

\begin{lemma}[Few dangerous rows] 
\label{lem:key-lemma-Komlos}
For the process $x_t$, with probability at least $1-1/n$,
the total number of dangerous rows across all scales $p \in [P]$ is at most $|V_t|/10$,
 for every $0 \leq t \leq T$. 
 \end{lemma}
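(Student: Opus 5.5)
The plan is to follow the heuristic in the overview: a column-by-column counting argument driven by a weighted form of the decoupling bound (\Cref{thm:decoupling-bound}), followed by an averaging argument over the alive columns $V_t$.

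\textbf{Step 1: the frozen process.} Fix a scale $p$. For each row $i$, define a frozen process $Z_t(i)$. It is constant (equal to $Y^{(p)}_{\tau^{(p)}_{i,\mathsf{med}}}(i)$) for $t<\tau^{(p)}_{i,\mathsf{med}}$. It equals $Y^{(p)}_t(i)$ on the medium interval $[\tau^{(p)}_{i,\mathsf{med}},\tau^{(p)}_{i,\mathsf{small}})$. It is frozen again afterwards.

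Since a row enters the medium phase with $Y\le b$ (as $\beta=b/10$), a dangerous row at time $t$ satisfies $\sup_s (Z_s(i)-Z_0(i))\ge b$. So every row that is dangerous at some time is \emph{bad} with $B:=b$.

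\textbf{Step 2: $Z$ is $(\alpha,\theta)$-independent.} I claim $Z$ is $(\alpha,\theta)$-independent with $\alpha=6P/\mu_p$ and $\theta=\beta/24=b/240$. The argument is:
\begin{itemize}
\item Its active rows at time $t$ are exactly the medium rows. Its covariance is a principal submatrix of $E^{(p)}_tU_t(E^{(p)}_t)^\top$ padded with zeros, so constraint (v) of \Cref{def:Sigma_t-Komlos} gives almost pairwise independence.
\item Constraint (iv) together with \Cref{prop:weak_ind_reg-disc} gives the drift condition.
\item That proposition needs $\beta\|a^{(p)}_i\|_\infty\le1/2$. If $p<P$ and $2^p\le 0.4b$, then $\mu_p=4b/2^p\ge 10$, so medium rows do not exist and the scale is vacuous. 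Otherwise $\beta 2^{-p+1}=b2^{-p}/5<1/2$. For $p=P$ the entries are at most $\log^{-5}n$, so the condition is immediate.
\end{itemize}

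\textbf{Step 3: a weighted decoupling bound.} Fix a column $j$ and weights $w_i:=a^{(p)}_i(j)^2$, which satisfy $\sum_i w_i\le1$. Rerun the proof of \Cref{thm:decoupling-bound} with $\Phi_t=\sum_i w_iX_t(i)$. The drift picks up the factor $w_i$, while the quadratic variation picks up $w_i^2\le w_{\max}w_i$. This yields, with probability $1-\gamma$,
\[
\sum_{i\ \mathrm{bad}} w_i \le e^{-\lambda b}+\frac{w_{\max}\,\alpha\lambda}{\theta}\log(1/\gamma),
\]
where $w_{\max}\le 4^{-p+1}$ for $p<P$ and $w_{\max}\le\log^{-10}n$ for $p=P$. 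Take $\gamma=n^{-3}$ and union bound over all $j\in[n]$ and $p\in[P]$. With probability $1-1/n$, every column then carries bad scale-$p$ mass at most this quantity, simultaneously for all times.

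\textbf{Step 4: averaging over alive columns.} Let $D_t$ be the set of scale-$p$ dangerous rows at time $t$. Each has $\sum_{j\in V_t}a^{(p)}_i(j)^2>\mu_p$ and is bad. Hence
\[
|D_t|\,\mu_p<\sum_{j\in V_t}\sum_{i\ \mathrm{bad}}w_i\le |V_t|\cdot\max_j(\cdot).
\]
So it suffices to make the per-column bound at most $\mu_p/(10P)$; summing over the $P$ scales then gives $|V_t|/10$.

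\textbf{Step 5: parameter choice.} This is the main obstacle. The choice $\lambda=\theta$ fails: for small $p$ the additive term would need $b^2\gtrsim P^2\log n$. So instead take $\lambda$ just large enough that $e^{-\lambda b}\le\mu_p/(20P)$. Since $\mu_p\ge b^2/\log n$, this means $\lambda\approx\log(20P\log n/b^2)/b=O(\log\log n/b)$, which is $\le\theta$.

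The additive term is then $O\big(w_{\max}\,(P/\mu_p)\,(\log\log n/b^2)\,\log n\big)$, and we need it at most $\mu_p/(20P)$, i.e. $w_{\max}\lesssim \mu_p^2b^2/(P^2\log\log n\,\log n)$.
\begin{itemize}
\item For $p<P$, use $\mu_p^2\ge16b^24^{-p}$. The requirement reduces to $b^4\gtrsim P^2\log n\log\log n$. This holds since $b^4=C^4\log n(\log\log n)^3$ and $P=O(\log\log n)$, for $C$ large.
\item For $p=P$, use $\mu_P\ge b^2/\log n$. The requirement $\log^{-10}n\lesssim b^6/(P^2\log^3n\log\log n)$ is trivial.
\end{itemize}
The exponent $3/4$ on $\log\log n$ in $b$ is exactly what makes the $p<P$ case work.
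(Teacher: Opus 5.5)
Your proposal is correct, and its overall architecture matches the paper's: the same frozen regularized-discrepancy process, $(\alpha,\beta/4\eta)$-independence from constraints (iv)--(v), a per-column decoupling bound with $\lambda\approx\log\log n/b$ (not $\lambda=\theta$), and averaging over $V_t$. It ends in the same requirement $b^4\gg P^2\log n\log\log n$.

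The genuine difference is your weighted decoupling bound with $\Phi_t=\sum_i w_iX_t(i)$ and $w_i=a_i^{(p)}(j)^2$. The paper instead applies the unweighted \Cref{cor:decoupling_reg-disc} to count bad rows in $N_j^{(p)}$. For $p<P$ it converts counts to mass using that all scale-$p$ entries lie within a factor $2$ of each other and that $|N_j^{(p)}|\le 4^p$. At scale $P$, where entry magnitudes vary widely, it has to split column $j$ further into dyadic subscales $q\ge P$, apply the corollary to each subscale, and sum $\sum_q 2^{-2q+2}\bigl(k_{j,q}\log^{-2}n+O(\log n)\bigr)$. In your version $\Phi_0=\sum_i w_i\le 1$ and $w_i^2\le w_{\max}w_i$ in the quadratic variation, so you bound the bad mass directly. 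This treats all scales uniformly, removes the subscale decomposition, and needs a union bound over only $nP$ events, whereas the subscale route must also union-bound over the subscales of each column. The paper's route has the advantage of using the decoupling theorem purely as a black box.

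One small technical fix is needed in Step 1. Setting $Z_t(i)=Y^{(p)}_{\tau^{(p)}_{i,\mathsf{med}}}(i)$ for $t<\tau^{(p)}_{i,\mathsf{med}}$ anticipates the future, so $Z$ is not adapted and is not an It\^o process in the paper's sense. The decoupling argument only uses the increment
$Z_t(i)-Z_0(i)=\int_0^t \mathbf 1_{[\tau_{\mathsf{med}},\tau_{\mathsf{small}})}(s)\,dY^{(p)}_s(i)$,
which is adapted. So it suffices to start from a deterministic value, as the paper does by setting $Z_t(i)=b$ before $\tau^{(p)}_{i,\mathsf{med}}$ and shifting by $b-Y^{(p)}_{\tau^{(p)}_{i,\mathsf{med}}}(i)$ afterward; nothing else in your argument changes.
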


\begin{proof}
We will show that, with high probability, for all $0 \leq t \leq T$, the number of dangerous scale-$p$ rows is at most $|V_t|/(10P)$ for each scale $p \in [P]$.

The idea is the following. By definition, each scale-$p$ medium row has at least $\mu_p$ squared-$\ell_2$-mass $\sum_{j\in V_t}a_i^{(p)}(j)^2 $ on $V_t$. So to bound the number of dangerous rows at time $t$ by $|V_t|/(10P)$, it suffices to upper bound this total mass over all scale-$p$ dangerous rows by $(|V_t|/10P) \mu_p$.
To handle all possible subsets $V_t$, we will show that,  
with high probability,
this mass is at most $\mu_p/10P$ for every column $j\in [n]$, at all times $t$.
To show the latter, we will carefully use the affine spectral independence constraints.

We now give the details.
We first consider scales $ p \leq P-1$. Scale $P$ needs a separate argument as the non-zero entries there can vary widely in magnitude.

\smallskip
{\bf Scales $p \leq P-1$:} First, if $p< \log_2(2b/5)$, the result holds vacuously as $\mu_p = 4b/2^p >  10$, and thus there are no medium scale-$p$ rows by definition. 
So we assume that $p\geq  \log_2(2b/5)$.

Fix any scale $\log_2(2b/5) \leq p \leq P-1$ and any column $j \in [n]$. Let \[N_j^{(p)} := \{i \in [m]: a_i^{(p)}(j) \neq 0\}\]
be the set of rows with non-zero scale-$p$ entries in column $j$. 
Then $|a_i^{(p)}(j)| \in (2^{-p},2^{-p+1}]$ for each such entry, and $|N_j^{(p)}| \leq 2^{2p}$. 
Let $N_{j,{\mathsf{bad}}}^{(p)}$ denote the subset of rows of $N_j^{(p)}$ that ever become dangerous during the process.
We will show that $|N_{j,{\mathsf{bad}}}^{(p)}| \leq 2^{2p} (\mu_p/40P)$, with probability at least $1-n^{-4}$, and thus the squared $\ell_2$ mass of such entries is at most $\mu_p/10P$, as desired.

To do this, we would like to apply the decoupling bound of \Cref{cor:decoupling_reg-disc} to the regularized discrepancies $Y_t^{(p)}(i)$, as in \eqref{eq:reg-disc-med-Komlos}, of the scale-$p$ medium rows.
However, there is a technical issue that the set $\mathcal{M}_t^{(p)}$ changes over time. 
To handle this, we first extend the definition of $Y_t^{(p)}$ to all rows, and introduce a modified process $Z_t^{(p)}$ with entries $Z_t^{(p)}(i)$ defined for all $i \in [m]$ as follows: 
\begin{align} \label{eq:modified_reg-disc-Komlos}
Z_t^{(p)}(i) := 
\begin{cases}
b \quad &\text{when $0\leq t < \tau_{i,\mathsf{med}}^{(p)}$,} \\
Y_t^{(p)}(i) + (b - Y_{\tau_{i,\mathsf{med}}^{(p)}}^{(p)}(i) ) \quad & \text{when $ \tau_{i,\mathsf{med}}^{(p)} \leq t < \tau_{i,\mathsf{small}}^{(p)}$,} \\
Y_{{\tau_{i,\mathsf{small}}^{(p)}}}^{(p)}(i) + (b - Y_{\tau_{i,\mathsf{med}}^{(p)}}^{(p)}(i)) \quad &\text{otherwise,}
\end{cases}
\end{align}
where $\tau_{i,\mathsf{med}}^{(p)}$ and $\tau_{i,\mathsf{small}}^{(p)}$ are as in \eqref{eq:stopping-times-Komlos}.

Note that $Z_t^{(p)}(i)$ has $0$ increment while row $i$ is scale-$p$ large or small, and has increment $dZ_t^{(p)}(i) = d Y_t^{(p)}(i)$ when it is scale-$p$ medium. Moreover, as $Y_{t}^{(p)}(i) \leq b = Z_{t}^{(p)}(i)$ at $t=\tau_{i,\mathsf{med}}^{(p)}$, we have that $Z_{t}^{(p)}(i) \geq Y_{t}^{(p)}(i) $ for all $\tau_{i,\mathsf{med}}^{(p)} \leq t < \tau_{i,\mathsf{small}}^{(p)}$.
In particular, $Z_t^{(p)}(i) - Z_0^{(p)}(i) \geq b$ whenever row $i$ is dangerous at time $t$. Thus we can upper bound $|N_{j,{\mathsf{bad}}}^{(p)}|$:
\begin{align}
\label{eq:relate-nj-zj}
    |N_{j,{\mathsf{bad}}}^{(p)}| \leq \Big| \big\{i \in N_j^{(p)} : \sup_{0\leq t \leq T} \big(Z_t^{(p)}(i) - Z_0^{(p)}(i)\big) \geq b \big\} \Big|.
\end{align} 
We will use the decoupling bound in \Cref{cor:decoupling_reg-disc} to bound the right side.

Note that every scale-$p$ row satisfies the hypothesis $\beta\|a_i^{(p)}\|_\infty\le 1/2$ of \Cref{prop:weak_ind_reg-disc}, since $\beta=b/10$ and $\|a_i^{(p)}\|_\infty\le 2^{-p+1}\le 5/b$ in the present range of $p \geq \log_2(2b/5)$. 
As the diffusion matrix satisfies constraints (iv) and (v) of \Cref{def:Sigma_t-Komlos} with $\eta=6$ and $\alpha = 6P/\mu_p$, 
\Cref{prop:weak_ind_reg-disc} therefore applies to the coordinates that are currently scale-$p$ medium, for which $dZ_t^{(p)}(i)=dY_t^{(p)}(i)$. The remaining coordinates have $dZ_t^{(p)}(i)=0$ and trivially satisfy both conditions of \Cref{def:alpha-theta-independence}. 
Hence $Z_t^{(p)}$ is an $(\alpha,\beta/4\eta)$-independent It\^o process. The same holds for its restriction $Z_{j,t}^{(p)}$ to the coordinates in $N_j^{(p)}$, as both defining conditions pass to principal submatrices of the covariance.
 
To bound the right side in \eqref{eq:relate-nj-zj}, we apply \Cref{cor:decoupling_reg-disc} to  $Z_{j,t}^{(p)}$ with threshold $B= b$ and parameters $\gamma = n^{-4}$ and $\lambda = (\log\log n)/b$. This satisfies the required condition $\lambda \le \beta/(4\eta)$: indeed $\lambda=(\log\log n)/b\le b/240=\beta/(4\eta)$, since $b^{2}\gg\log\log n$. 
Plugging in these parameters together with $\alpha = (6P)/\mu_p$, and using
 \eqref{eq:relate-nj-zj}, we have 
 with probability at least $1 - n^{-4}$, 
\begin{align} \label{eq:small_scale_bound}
|N_{j,\mathsf{bad}}^{(p)}|  \le
|N_j^{(p)}| e^{-\lambda B} +
\frac{4 \eta \alpha\lambda}{\beta}\log(n^{4})  \le \frac{|N_j^{(p)}|}{\log n}+
\frac{c P \log n \log \log n}{b^2 \mu_p},
\end{align}
where $c$ is a fixed constant independent of the constant $C$ in the definition of $b$.

We claim that the bound in \eqref{eq:small_scale_bound} is at most $2^{2p} \mu_p/40P$. 
Indeed, as $|N_{j}^{(p)}| \leq 2^{2p}$,
the first term is at most $2^{2p}/\log n \leq 2^{2p} \mu_p/80P$, as $\mu_p \geq b^2/\log n = \omega(P/\log n)$.

For the second term, using $\mu_p \geq (4b)/2^p$ we have that 
\[ \frac{c P \log n \log \log n}{b^2 \mu_p^2} \leq  \frac{c 2^{2p}} {16} \cdot \frac{P \log n \log \log n}{b^4} \ll 2^{2p} \frac{1}{80P},
\]
where the final inequality follows as $P = O(\log \log n)$ and by our choice of $b = C \log^{1/4} n (\log \log n)^{3/4}$ with $C$ large enough.

\smallskip
\noindent\textbf{The scale $P$.} 
As before, for each column $j$, we will bound the total squared $\ell_2$-mass that column $j$ carries in dangerous scale-$P$ rows by $\mu_P/10P$.

Fix a column \(j\). Partition the non-zero entries of \(A^{(P)}\) into subscales
$q \geq  P$ where subscale \(q\) consists of entries with magnitude in
\((2^{-q},2^{-q+1}]\). Let \(k_{j,q}\) denote the number of scale-\(P\), subscale-\(q\) entries
in column \(j\). As column \(j\) has \(\ell_2\)-norm at most \(1\),
we have 
$\sum_q 2^{-2q} k_{j,q} \le 1.$

As before, consider the scale-$P$ regularized discrepancy process $Z^{(P)}_t$, and for a subscale $q$ let $Z^{(P)}_{j,q,t}$ denote its restriction to the rows with a subscale-$q$ entry in column $j$. By constraints (iv) and (v) of \Cref{def:Sigma_t-Komlos} for $p=P$, together with \Cref{prop:weak_ind_reg-disc} (whose hypothesis $\beta\|a_i^{(P)}\|_\infty\le 1/2$ holds as $\|a_i^{(P)}\|_\infty\le \log^{-5}n$), the process $Z^{(P)}_{j,q,t}$ is $(\alpha,\beta/4\eta)$-independent with $\alpha=6P/\mu_P$ and $\eta=6$. Applying the decoupling bound \Cref{cor:decoupling_reg-disc} with threshold $B=b$, $\lambda=(2\log\log n)/b$ and $\gamma=n^{-4}$ (as before, $\lambda\le\beta/4\eta$), we get that with probability at least $1-n^{-4}$, the number of dangerous subscale-$q$ rows in column $j$ is at most
\[
\begin{aligned}
&k_{j,q} \log^{-2} n + \frac{4 \eta \alpha\lambda}{\beta}\log(n^{4}) \le\ k_{j,q} \log^{-2} n +
O(\log n).
\end{aligned}
\]
Here we used that

\[\frac{4\eta\alpha\lambda}{\beta}\,\log(n^{4}) = O\!\Big(\frac{P\,\log\log n\,\log n}{\mu_P\, b^{2}}\Big) \le O\!\Big(\frac{P\,\log\log n\,\log^{2} n}{b^{4}}\Big) = O\!\Big(\frac{\log n}{\log\log n}\Big) \le O(\log n),\]
using $\mu_P\ge b^{2}/\log n$, $b^{4}=C^{4}\log n\,(\log\log n)^{3}$ and $P=O(\log\log n)$. 
 Since all entries in  subscale $q$ have magnitude at most \(2^{-q+1}\), it follows that the
total squared \(\ell_2\)-mass of dangerous scale-\(P\) entries in column \(j\) is at most
\[
\begin{aligned}
\sum_{q\geq P} 2^{-2q+2}\Bigl(k_{j,q}\log^{-2} n + O(\log n)\Bigr)
& \le O(\log^{-2} n) + O(2^{-2P}\log n) = O(\log^{-2} n) \ll
\frac{\mu_P}{10P},
\end{aligned}
\]
since $\mu_P\ge b^{2}/\log n = C^{2}(\log\log n)^{3/2}/\sqrt{\log n}$ and $P=\Theta(\log\log n)$.

\smallskip
\noindent \textbf{Wrapping Up.} Finally, we combine the column-wise bounds. Condition on the event that all the above bounds hold for every column $j\in[n]$, 
which happens with probability at least $1-1/n$ by a union bound. Fix a time $t$ and a scale $p$, and let $\mathcal{D}_t^{(p)}$ denote the set of scale-$p$ dangerous rows at time $t$. Every row in $\mathcal{D}_t^{(p)}$ is scale-$p$ medium at time $t$, and hence carries squared $\ell_2$ mass more than $\mu_p$ on $V_t$; moreover, it has become dangerous by time $t$, and thus contributes to the dangerous mass of every column in its support. Consequently,
\[
|\mathcal{D}_t^{(p)}|\,\mu_p \;\le\; \sum_{j\in V_t}\ \sum_{i\in \mathcal{D}_t^{(p)}} a_i^{(p)}(j)^2 \;\le\; \sum_{j\in V_t}\frac{\mu_p}{10P} \;=\; |V_t|\,\frac{\mu_p}{10P},
\]
so $|\mathcal{D}_t^{(p)}|\le |V_t|/(10P)$. Summing over $p\in[P]$ completes the proof.
\end{proof}

\begin{proof}[Proof of \Cref{thm:main}]
By \Cref{lem:key-lemma-Komlos}, with probability at least $1-1/n$ the number of dangerous rows is at most $|V_t|/10$ for all $0\le t\le T$, and on this event $\Sigma_t$ exists for all $t$ by \Cref{lem:existence-Sigma_t}. By \Cref{lem:disc-given-Sigma}, on this event the output coloring $x\in\{-1,1\}^n$ satisfies, with probability at least $1-2/n$, $\|Ax\|_\infty = O(bP) = O\big((\log n)^{1/4}(\log\log n)^{7/4}\big)$. This proves the theorem. 
\end{proof}

\bibliographystyle{alpha}
\bibliography{bib.bib}

@article{AT26,
  title={Online Beck--Fiala Down to Logarithmic Sparsity},
  author={Altschuler, Dylan J and Tikhomirov, Konstantin},
  journal={arXiv preprint arXiv:2607.14238},
  year={2026}
}

@inproceedings{BJ26,
  title={Decoupling via affine spectral-independence: Beck-Fiala and Koml{\'o}s bounds beyond Banaszczyk},
  author={Bansal, Nikhil and Jiang, Haotian},
  booktitle={Proceedings of the 58th Annual ACM Symposium on Theory of Computing},
  pages={432--442},
  year={2026}
}

@book{BV04book,
  title={Convex optimization},
  author={Boyd, Stephen P and Vandenberghe, Lieven},
  year={2004},
  publisher={Cambridge university press}
}

@article{CS21,
  title={A note on norms of signed sums of vectors},
  author={Chasapis, Giorgos and Skarmogiannis, Nikos},
  journal={Advances in Geometry},
  volume={21},
  number={1},
  pages={5--14},
  year={2021},
  publisher={De Gruyter}
}

@article{Haj88,
  title={On a conjecture of Koml\'{o}s about signed sums of vectors inside the sphere},
  author={Hajela, D},
  journal={European Journal of Combinatorics},
  volume={9},
  number={1},
  pages={33--37},
  year={1988},
  publisher={Elsevier}
}

@article{Kun23,
  title={The discrepancy of unsatisfiable matrices and a lower bound for the Koml{\'o}s conjecture constant},
  author={Kunisky, Dmitriy},
  journal={SIAM Journal on Discrete Mathematics},
  volume={37},
  number={2},
  pages={586--603},
  year={2023},
  publisher={SIAM}
}

@article{Ban24,
  title={On a Generalization of Iterated and Randomized Rounding},
  author={Bansal, Nikhil},
  journal={Theory of Computing},
  volume={20},
  number={1},
  pages={1--23},
  year={2024},
  publisher={Theory of Computing Exchange}
}

@book{CST14,
  title={A panorama of discrepancy theory},
  author={Chen, William and Srivastav, Anand and Travaglini, Giancarlo},
  volume={2107},
  year={2014},
  publisher={Springer}
}

@inproceedings{Ban22,
  title={Discrepancy theory and related algorithms},
  author={Bansal, Nikhil},
  booktitle={Proc. Int. Cong. Math},
  volume={7},
  pages={5178--5210},
  year={2022}
}

@book{Cha00,
  title={The Discrepancy Method: Randomness and Complexity},
  author={Chazelle, Bernard},
  year={2000},
  publisher={Cambridge University Press}
}

@book{Mat09,
  author    = {Jiří Matoušek},
  title     = {Geometric Discrepancy: An Illustrated Guide},
  series    = {Algorithms and Combinatorics},
  volume    = {18},
  publisher = {Springer-Verlag},
  address   = {Berlin},
  year      = {1999},
}

@article{Spe85,
  title={Six standard deviations suffice},
  author={Spencer, Joel},
  journal={Transactions of the American mathematical society},
  volume={289},
  number={2},
  pages={679--706},
  year={1985}
}

@article{Glu89,
  title={Extremal properties of orthogonal parallelepipeds and their applications to the geometry of Banach spaces},
  author={Gluskin, Efim Davydovich},
  journal={Mathematics of the USSR-Sbornik},
  volume={64},
  number={1},
  pages={85},
  year={1989}
}

@article{Bec81,
  title={Roth’s estimate of the discrepancy of integer sequences is nearly sharp},
  author={Beck, J{\'o}zsef},
  journal={Combinatorica},
  volume={1},
  number={4},
  pages={319--325},
  year={1981},
  publisher={Springer}
}

@article{BF81,
  title={“{I}nteger-making” theorems},
  author={Beck, J{\'o}zsef and Fiala, Tibor},
  journal={Discrete Applied Mathematics},
  volume={3},
  number={1},
  pages={1--8},
  year={1981},
  publisher={Elsevier}
}

@article{Ban98,
  title={Balancing vectors and {G}aussian measures of n-dimensional convex bodies},
  author={Banaszczyk, Wojciech},
  journal={Random Structures \& Algorithms},
  volume={12},
  number={4},
  pages={351--360},
  year={1998},
  publisher={Wiley Online Library}
}

@InProceedings{BLV22,
  author =	{Bansal, Nikhil and Laddha, Aditi and Vempala, Santosh},
  title =	{{A Unified Approach to Discrepancy Minimization}},
  booktitle =	{Approximation, Randomization, and Combinatorial Optimization. Algorithms and Techniques (APPROX/RANDOM 2022)},
  pages =	{1:1--1:22},
  series =	{Leibniz International Proceedings in Informatics (LIPIcs)},
  ISBN =	{978-3-95977-249-5},
  ISSN =	{1868-8969},
  year =	{2022},
  volume =	{245},
  editor =	{Chakrabarti, Amit and Swamy, Chaitanya},
  publisher =	{Schloss Dagstuhl -- Leibniz-Zentrum f{\"u}r Informatik},
  address =	{Dagstuhl, Germany},
  URL =		{https://drops.dagstuhl.de/entities/document/10.4230/LIPIcs.APPROX/RANDOM.2022.1},
  URN =		{urn:nbn:de:0030-drops-171238},
  doi =		{10.4230/LIPIcs.APPROX/RANDOM.2022.1}
}

@inproceedings{BG17,
  title={Algorithmic discrepancy beyond partial coloring},
  author={Bansal, Nikhil and Garg, Shashwat},
  booktitle={ Symposium on Theory of Computing, {STOC}},
  pages={914--926},
  year={2017}
}

@article{BDG19,
  title={An algorithm for {K}oml{\'o}s conjecture matching Banaszczyk's bound},
  author={Bansal, Nikhil and Dadush, Daniel and Garg, Shashwat},
  journal={SIAM Journal on Computing},
  volume={48},
  number={2},
  pages={534--553},
  year={2019},
  publisher={SIAM}
}

\end{document}